\documentclass[11pt]{article}
\usepackage{amssymb}
\usepackage{}
\usepackage{pifont}
\usepackage{amscd}
\usepackage{amsfonts}
\usepackage{amsmath, amsthm}
\usepackage{amssymb, amsxtra}

\usepackage{graphicx}

\usepackage{multicol}
\usepackage{epstopdf}
\usepackage{epsf}
\usepackage{float}
\usepackage{extarrows}
\usepackage[a4paper, text={.8\paperwidth,.83\paperheight}, ratio=1:1]{geometry}
\usepackage[format=hang,font=small,textfont=it]{caption}
\usepackage{authblk}

\usepackage[colorlinks=true, linkcolor=blue, citecolor=blue]{hyperref}
\usepackage[english]{babel}
\usepackage{latexsym}

\usepackage{tikz}
\usetikzlibrary{calc}
\usepackage[tight]{subfigure}


\usepackage{color}
\definecolor{greenbean}{RGB}{199,237,204}


\newtheorem{thm}{Theorem}[]

\newtheorem{prop}[thm]{Proposition}
\newtheorem{cor}[thm]{Corollary}

\newtheorem{Def}[thm]{Definition}

\def\C{{\mathbb C}}

\def\P{{\mathbb P}}

\def \ta{\tau}

\def \ta1{\tau_1}

\def \G{\Gamma}

\setlength{\parindent}{2em} 


\begin{document}
\title{On the  Galois covers of degenerations of surfaces of minimal degree
\footnotetext{\hspace{-1.8em} Email address: M. Amram: meiravt@sce.ac.il; C. Gong: cgong@suda.edu.cn; Jia-Li Mo:  mojiali0722@126.com;\\
2020 Mathematics Subject Classification. 05E15, 14J10, 14J25, 14N20.}}

\author[1]{Meirav Amram}
\author[2]{Cheng Gong}
\author[2]{Jia-Li Mo}

\affil[1]{\small{Shamoon College of Engineering, Ashdod, Israel}}
\affil[2]{\small{Department of Mathematics, Soochow University, Shizi RD 1, Suzhou 215006, Jiangsu, China}}

\date{}
\maketitle

\abstract{We investigate  the topological structures  of Galois covers of   surfaces  of minimal degree (i.e., degree $n$) in $\mathbb{CP}^{n+1}$. We prove that for $n\geq 5$, the Galois covers  of any surfaces  of minimal degree are simply-connected surfaces of general type.}

\section{Introduction}\label{outline}

The moduli space of surfaces is a hot topic that mathematicians refer to, see for example \cite{Hu1,Hu2}. The moduli space of surfaces of general type is a quasi-projective coarse moduli  scheme \cite{Gie}. Unlike  the moduli of curves, it is not irreducible. Catanese \cite{C1, C2} and Manetti \cite{Ma} characterized the structures and the number of components of some moduli spaces. Not much more was thereafter known about the moduli space of surfaces of general type. Then, in \cite{Tei2}, Teicher defined some new invariants of surfaces that were stable on connected components of moduli space. These new invariants come from the polycyclic structure of the fundamental group of the complement of the branch curve $S$, of the generic projection from a surface $X$, to $\mathbb{CP}^2$.

The fundamental group $\pi_1(\mathbb{CP}^2-S)$  of the complement of $S$ does not change when the complex structure of $X$ changes continuously. In fact, all surfaces in the same component of the moduli space have the same homotopy type and therefore have the same  group $\pi_1(\mathbb{CP}^2-S)$.

In \cite{MoTe87a} and \cite{MoTe87}, Moishezon-Teicher showed that if $X_{\text{Gal}}$ is the Galois cover of $X$, then its fundamental group $\pi_1(X_{\text{Gal}})$  can be obtained as a quotient group of $\pi_1(\mathbb{CP}^2-S)$. As a consequence, $\pi_1(X_{\text{Gal}})$ does not change when the complex structure of $X$ changes continuously. Based on this idea, they constructed  a series of simply-connected algebraic surfaces of general type with positive and zero indices, disproving the Bogomolov Conjecture, which states that an algebraic surface of general type with a positive index has an infinite fundamental group. These examples  have important value in  the geography of algebraic  surfaces.

To compute the group $\pi_1(X_{\text{Gal}})$, we construct some degenerations and work with special singularities that are defined and explained below.

In \cite{Zappa} and \cite{zg2} Zappa  first studied degenerations of scrolls to unions of planes. Then, in \cite{C-C-F-M-2} and \cite{C-C-F-M-4}, Calabri, Ciliberto, Flamini, and Miranda considered the flat degenerations of surfaces whose general fiber is a smooth projective algebraic surface and whose central fiber is a union of planes  in $\mathbb{CP}^r, r\geq3$ with the following singularities:

\begin{itemize}
 \item in codimension 1, double curves that are smooth and irreducible along with two surfaces meeting transversally;
\item  multiple points that are locally analytically isomorphic to the vertex of a cone over the stick curve, with an arithmetic genus of either 0 or 1, which is projectively normal in the projective space it spans.
\end{itemize}

These multiple points will be called {\it Zappatic singularities} and the  central fiber (a union of planes) will be called {\it a planar Zappatic surface}. A degeneration  is called  {\it a planar  Zappatic degeneration}, that is, a smooth surface that flatly degenerates to a planar Zappatic surface, as described in Section \ref{method} and Figure \ref{abc}.

The topological structure of a  Zappatic degeneration is complicated. In \cite{C-C-F-M-2},  \cite{C-C-F-M-3},  and \cite{C-C-F-M-4}, the authors discuss the effect of a  Zappatic degeneration  on its numerical invariants, such as the Euler-Poincar\'{e} characteristic, sectional genus, geometric genus,  and Chern numbers.

We are interested in the  topological structure of Galois covers of  planar Zappatic degenerations, which have not been as extensively known until now. We have discussed these  degenerations  in \cite{degree6,AGTX1,A-R-T}. In this paper, we continue to study the  planar Zappatic degenerations and find the group $\pi_1(X_{\text{Gal}})$.
We focus on special  planar Zappatic degenerations --- the degenerations to the cone over the stick curve $C_{R_k}$ (i.e., unions of lines with only nodes as singularities).
Now,   let $T_k$ be any connected tree with $k\geq3$ vertices. This corresponds to a non-degenerate stick curve of degree $k$ in ${\mathbb{CP}^k}$, which we denote by $C_{T_k}$. Moreover, when the tree $T_k$ consists of a chain $R_k$ of length $k$. The curve $C_{R_k}$ is the union of $k$ lines $l_1, l_2, \dots , l_k$ spanning ${\mathbb{CP}^k}$, such that $l_i \cap l_j = \emptyset$ iff $|i - j|>1$, as described in Figure \ref{R}.

\begin{figure}[H]
		\definecolor{circ_col}{rgb}{0,0,0}
\begin{center}
		\begin{tikzpicture}[x=1cm,y=1cm,scale=0.8]
\draw [fill=circ_col] (-7,0) circle (1pt);
		\draw [fill=circ_col] (-6,0)  circle (1pt);
		\draw [fill=circ_col] (-5,0) circle (1pt);
		\draw [fill=circ_col] (-4,0)  circle (1pt);
		\draw [fill=circ_col] (-3,0) circle (1pt);
		\draw [fill=circ_col] (-2,0)  circle (1pt);
		\draw [fill=circ_col] (-1,0)  circle (1pt);
        \draw  (-7,0) -- (-6,0) -- (-5,0) -- (-4,0) -- (-3,0) -- (-2,0) -- (-1,0);
      \draw  (2.5,0.5) -- (1.5,-0.5);
      \draw  (2,0.5) -- (3,-0.5);
      \draw  (3.5,0.5) -- (2.5,-0.5);
       \draw  (3,0.5) -- (4,-0.5);
       \draw  (4.5,0.5) -- (3.5,-0.5);
       \draw  (4,0.5) -- (5,-0.5);
        \draw  (5.5,0.5) -- (4.5,-0.5);
      \draw [fill=circ_col] (2.25,0.25)  circle (1pt);
      \draw [fill=circ_col] (3.25,0.25)  circle (1pt);
      \draw [fill=circ_col] (4.25,0.25)  circle (1pt);
\draw [fill=circ_col] (2.75,-0.25)  circle (1pt);
      \draw [fill=circ_col] (3.75,-0.25)  circle (1pt);
      \draw [fill=circ_col] (4.75,-0.25)  circle (1pt);
      \end{tikzpicture}
      \end{center}
\setlength{\abovecaptionskip}{0.15cm}
	\caption{A chain $R_k$ ~~~~~~~~~~~~~~~~~~~~ A stick curve $C_{R_k}$}\label{R}
\end{figure}

It is well known that the smallest possible degree of an irreducible, non-degenerate surface $X\subset \mathbb{CP}^{n+1}$ is $n$. Such surface is said to be a surface of minimal degree.
Because any surface $X$ of minimal degree in $\mathbb{CP}^{n+1}$ can be flatly degenerated to the cone over the stick curve $C_{T_n}$ (\cite[Corollary 12.2]{C-C-F-M-5}),  we get our main theorem:

\begin{thm}\label{mainthm}
  The Galois covers  of \underline{surfaces of minimal degree} in $\mathbb{CP}^{n+1}$ are simply-connected surfaces of general type for $n\geq 5$.
\end{thm}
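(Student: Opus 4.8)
The plan is to convert the statement into a braid-monodromy computation on a degeneration and then pass to the Galois quotient. Fix a surface $X\subset\mathbb{CP}^{n+1}$ of minimal degree $n$ and a generic projection $f\colon X\to\mathbb{CP}^2$ of degree $n$, with branch curve $S\subset\mathbb{CP}^2$. By the Moishezon-Teicher machinery recalled above, $\pi_1(X_{\text{Gal}})$ is a quotient of $\pi_1(\mathbb{CP}^2-S)$ and, crucially, is invariant under continuous deformation of the complex structure of $X$. Since $X$ flatly degenerates to the cone over the stick curve $C_{T_n}$ (a planar Zappatic surface), I would carry out the entire computation on this central fiber and transport the answer back to $X$. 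The problem then splits into three tasks: (i) obtain a presentation of $\pi_1(\mathbb{CP}^2-S)$; (ii) identify the Galois quotient and prove it is trivial; and, separately, (iii) check that $X_{\text{Gal}}$ is of general type.

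For task (i) I would compute the braid monodromy factorization of $S$. On the central fiber the branch curve degenerates to an arrangement of lines whose incidences are dictated by the dual graph of $C_{T_n}$: one component per plane of the Zappatic surface, meeting according to the edges of the tree $T_n$. The braid monodromy of this arrangement is read off directly from the combinatorics. The essential analytic input is the regeneration procedure of Moishezon-Teicher: each node and each cone (Zappatic) singularity of the central fiber carries a prescribed local rule that replaces the degenerate braid factors by the factors of the smooth branch curve, with intersection points regenerating into cusps, nodes, and branch points according to the local model. Applying these rules vertex by vertex along $T_n$ produces the full factorization, and the \vK{} theorem then yields a finite presentation of $\pi_1(\mathbb{CP}^2-S)$ with one generator $\Gamma_i$ per local sheet, relations of commutator, triple, and cusp type ($\Gamma_i\Gamma_j\Gamma_i=\Gamma_j\Gamma_i\Gamma_j$), and the projective relation $\prod_i\Gamma_i=1$.

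For task (ii) I would use the natural surjection $\pi_1(\mathbb{CP}^2-S)\twoheadrightarrow S_n$ that sends each $\Gamma_i$ to the transposition recording the two sheets exchanged along the corresponding branch locus. By the Moishezon-Teicher formula, $\pi_1(X_{\text{Gal}})$ is then obtained from the kernel of this surjection after imposing the relations $\Gamma_i^2=1$. I would run a \RMS{} rewriting on this $S_n$-cover and feed in the commutator, triple, and cusp relations: read through the incidence pattern of $T_n$, they should allow one to identify and successively eliminate the lifted generators. I expect these relations to collapse the kernel to the trivial group precisely when $n\geq 5$, the hypothesis entering because for fewer planes there are too few incidences for the relations to annihilate every generator.

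Finally, for task (iii) I would compute $c_1^2(X_{\text{Gal}})$ and $c_2(X_{\text{Gal}})$ from $n$ and the numerical data (degree, number of nodes and cusps) of $S$ via the standard Chern-number formulas for Galois covers of generic projections, and verify positivity of the relevant invariants together with minimality, so that $X_{\text{Gal}}$ has Kodaira dimension $2$ for $n\geq 5$. The step I expect to be the main obstacle is (ii), and specifically its uniformity: because surfaces of minimal degree degenerate to cones over different trees $T_n$, the branch-curve combinatorics, and hence the presentation, changes with the shape of the tree, so the triviality of the Galois quotient must be established for every tree on $n$ vertices at once. The natural remedy is to localize the contribution of each vertex according to its valence, prove the local collapsing statement once, and glue along the edges of $T_n$ by induction, reducing the general tree to iterated applications of the chain case $C_{R_n}$ analyzed in the earlier sections.
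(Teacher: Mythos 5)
Your overall architecture is the same as the paper's: degenerate to a planar Zappatic surface, compute the braid monodromy factorization of the degenerated branch curve, regenerate, apply van Kampen to present $\pi_1(\mathbb{CP}^2-S)$, pass to $G_1=G/\langle\Gamma_j^2,\Gamma_j'^2\rangle$ and show it collapses to $S_n$ so that the kernel $\pi_1(X_{\text{Gal}})$ is trivial, and finally verify general type via the Chern-number formula $c_1^2(X_{\text{Gal}})=\frac{d!}{4}(m-6)^2>0$. All of that matches Sections 2--3 of the paper, and you correctly identify the real difficulty: a surface of minimal degree only degenerates, a priori, to the cone over $C_{T_n}$ for \emph{some} tree $T_n$, so one must deal with every tree shape.

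Where your plan has a gap is in the remedy you propose for that difficulty. You suggest localizing the collapsing argument vertex-by-vertex according to valence and gluing along the edges of $T_n$ by induction. But the cone over $C_{T_n}$ has a \emph{single} Zappatic multiple point at the apex $O$, through which all $n-1$ double lines pass; its regeneration, and hence the braid monodromy factorization $B$, is a global function of the tree type, not a product of independent per-vertex contributions that can be glued along edges. For a non-chain tree the local model at $O$ is a different Zappatic singularity from the outer $(k-1)$-point computed in Section 3, so the induction you describe would require new braid-monodromy computations for each tree shape --- precisely the work you were hoping to avoid. The paper circumvents this entirely by a further geometric degeneration: for $n\geq 5$ every nondegenerate irreducible surface of degree $n$ in $\mathbb{CP}^{n+1}$ is a rational normal scroll, its hyperplane section is a rational normal curve, and the chain stick curve $C_{R_n}$ is a flat limit of rational normal curves (including all the $C_{T_n}$); combining this with Pinkham's theorem (Theorem \ref{Pin} and Corollary \ref{cor}), the surface degenerates to the cone over the \emph{chain} $C_{R_n}$. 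Deformation-invariance of $\pi_1(X_{\text{Gal}})$ then reduces the whole theorem to the single chain computation of Theorem \ref{thm(n+1)point}, with no induction over trees. This is also where the hypothesis $n\geq5$ actually enters (ruling out the Veronese surface in degree $4$), rather than through a shortage of incidences in the presentation as you speculate.
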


The paper is organized as follows:  In Section \ref{method}, we explain the methods we use and the terminology related to the paper. In Section \ref{Rk-type}, we deal with the special case: degenerations to the cone over the stick curve  $C_{R_k}$ (see Theorem \ref{thm(n+1)point}) and prove that the related Galois covers are of general type.
In Section \ref{main}, we relate the general case on degeneration to  the cone over the stick curve  $C_{T_k}$ to the special case in Section \ref{Rk-type}, and prove Theorem \ref{mainthm}.

\paragraph{Acknowledgements:}
We thank Dr. Yi Gu for useful discussions about the degeneration of surfaces. This research was supported by the NSFC and  ISF-NSFC joint research program (Grant No. 2452/17). It was also partly supported by the Natural Science Foundation of Jiangsu Province~(BK 20181427).
We thank two anonymous referees for great comments and suggestions.

\section{Method and terminology}\label{method}

In this section, we describe  the methods, the fundamental background, and some terminology that we use  in this paper. We will use these methods and terminology in Section \ref{Rk-type}.  The reader can refer  to \cite{A-R-T} and \cite{BGT2} for more details.

We consider planar Zappatic surfaces and are interested in the Galois cover of each such surface. The fundamental group of the Galois cover is a significant invariant of the surface, as explained in the introduction, and we are going to calculate it.

To do this, we first need to understand the following setting:
Let $ X $ be a projective algebraic surface embedded in projective space $ \mathbb{CP}^n $, for some $n$. Consider a generic projection  $f:\mathbb{CP}^n\to\mathbb{CP}^2$. The restriction of $ f|_X $ is branched along a curve $ S\subset \mathbb{CP}^2 $. The branch curve $ S $ can tell a lot about $ X $, but it is difficult to describe it explicitly. To tackle this problem we consider degenerations of $ X $, defined as follows.
	
\begin{Def}
Let $\Delta$ be the unit disk, and $X, X'$ be algebraic surface. Assume that $f$ and $f'$ are projective projections, where $f: X\rightarrow \mathbb{CP}^2$,~ $f': X'\rightarrow \mathbb{CP}^2$. We say that $f$ is a projective degeneration of $f'$ if there exists a flat family $\pi: \mathfrak{X} \rightarrow \Delta$ (and where $\mathfrak{X}\subseteq\Delta\times\mathbb{CP}^n, n\geq3$ is a closed subscheme of relative dimension two), and a morphism $F:\mathfrak{X} \rightarrow \Delta \times\mathbb{CP}^2 $, such that $F$ composed with the first projective is $\pi$, and:
\begin{itemize}
  \item  $\pi^{-1}(0)\simeq X.$
  \item  There exists $0\neq p_0\in \Delta$ such that $\pi^{-1}(p_0)\simeq X'.$
  \item  The family $\mathfrak{X} - \pi^{-1}(0)\rightarrow \Delta - \{0\}$ is smooth.
  \item  Restricting to $\pi^{-1}(0), F\simeq \{0\}\times$ $f$ under the identification of $\pi^{-1}(0)$ with $X$.
  \item  Restricting to $\pi^{-1}(p_0), F\simeq \{p_0\}\times f'$ under the identification of $\pi^{-1}(p_0)$ with $X'$.
\end{itemize}
\end{Def}
	
	We construct a degeneration of $ X $ into $X_0$, as a sequence of \emph{partial degenerations} $X: =X_r \leadsto X_{r-1} \leadsto \cdots X_{r-i} \leadsto
	X_{r-(i+1)} \leadsto \cdots \leadsto X_0$. The degeneration $X_0$ is a union of planes, and each plane is projectively equivalent to $\mathbb{CP}^2$ (see \cite{19} for detail).
	Consider generic projections $\pi^{(i)} : X_{i} \rightarrow \mathbb{CP}^2$ with the branch curves $S_i$, for $0 \leq i \leq r$. Note that $S_{i-1}$ is a degeneration of $S_{i}$. Because $ X_0 $ is a union of planes, its projection $ S_0 $ is a line arrangement.

One of the principal tools we use is a reverse process of degeneration, and it is called \emph{regeneration}. Using this tool, which was described in  \cite{19} as regeneration rules, we can recover $ S_i $ from $ S_{i-1} $. Applying it multiple times, we can recover the original branch curve $ S $	 from the line arrangement $ S_0 $. In the following diagram, we illustrate this process.
	
	\[\begin{CD}
	X\subseteq \mathbb{CP}^n  @>\text{degeneration}>> X_0\subseteq \mathbb{CP}^n \\
	@V\text{generic~ projection}VV                      @VV\text{generic~ projection}V \\
	S\subset \mathbb{CP}^2 @<\phantom{regeneration}<\text{regeneration}< S_0\subset \mathbb{CP}^2
	\end{CD}\]
	
A line in $ S_0 $ regenerates to a conic.
	The resulting components of the partial regeneration are tangent to each other.
	To get a transversal intersection of components, we regenerate further, and this gives us three cusps for each tangency point (see \cite{BGT2, 19} for more details). Therefore, the regenerated branch curve $ S $ is a cuspidal curve with nodes and branch points. Local braids of such singularities are as follows:
	\begin{enumerate}
		\item for a branch point, $Z_{j \;j'}$ is a counterclockwise
		half-twist of $j$ and $j'$ along a path below the real
		axis,
		\item for nodes, $Z^2_{i, j \;j'}=Z_{i\;j}^2 \cdot Z_{i\;j'}^2$ and $Z^2_{i \;i', j \;j'}=Z_{i'\;j'}^2 \cdot Z_{i\;j'}^2 \cdot Z_{i'\;j}^2 \cdot Z_{i\;j}^2$,
		\item for cusps, $Z^3_{i, j \;j'}=Z^3_{i\; j} \cdot (Z^3_{i\; j})^{Z_{j\;j'}} \cdot (Z^3_{i \; j})^{Z^{-1}_{j \; j'}}$.
	\end{enumerate}
By the braid monodromy technique of Moishezon-Teicher, we derive the braids related to $S$ as  conjugations of the above local forms  (i.e., $a^b = b^{-1}ab$). The reader can learn more about this technique in \cite{BGT2}; in the paper we give the final braids that are computed by this technique, as the computations themselves are too long and tiring.
Note that in several places we use the notation $ \bar{a} $ where $ a $ is a braid, which means the same braid as $ a $ but above the real axis.

Denote $ G:=\pi_1(\mathbb{CP}^2-S) $ and its standard generators as $\G_1, \G_{1}', \dots, \G_{2m}, \G_{2m}'$. By the van Kampen Theorem \cite{vk} we can get a presentation of $G$ by means of generators $\{\G_{j}, \G_{j}'\}$ and relations of the types:
	\begin{enumerate}
		\item for a branch point, $ Z_{j\; j'} $ corresponds to the relation $\G_{j} = \G_{j}'$,
		\item for a node, $ Z_{i\; j}^2 $ corresponds to   $[\G_{i},\G_{j}]=\G_{i}\G_{j}\G_{i}^{-1}\G_{j}^{-1}=e$,
		\item for a cusp, $ Z_{i\; j}^3 $ corresponds to  $\langle\G_{i},\G_{j}\rangle=\G_{i}\G_{j}\G_{i}\G_{j}^{-1}\G_{i}^{-1}\G_{j}^{-1}=e$.
	\end{enumerate}
To get all the relations, we write the braids in a product and collect all the relations that correspond to the different factors. To each list of relations we add the projective relation $\prod\limits_{j=m}^1 \G_{j}'\G_{j}=e$. See \cite{BGT2, 19} for full treatment of the subject.

This method also enables us to compute the fundamental group of the Galois cover $X_{\text{Gal}}$ of $X$.
\begin{Def}
We consider the fibered product arising from a general projection $f: X \to \mathbb{CP}^2$ of degree $n$ as
$$X\times_{f}\cdots\times_{f}X=\{(x_1, \ldots , x_n)\in X^n|~f(x_1)=\cdots=f(x_n)\}.$$
Let the extended diagonal be
$$ \triangle=\{(x_1, \ldots , x_n)\in X^n|~x_i=x_j, for ~some ~i\neq j\}.$$
The closure
$\overline{X\times_{f}\cdots\times_{f}X-\triangle}$ is called the Galois cover w.r.t. the symmetric group $S_n$ and denoted by $X_{\text{Gal}}$.
\end{Def}

Then, there is an exact sequence
	\begin{equation}\label{M-T}
	0 \rightarrow \pi_1(X_{\text{Gal}}) \rightarrow G_1 \rightarrow S_n \rightarrow 0,
	\end{equation}
where $ G_1:=G/\langle {\Gamma_j}^2 , {\Gamma'_j}^2 \rangle $ and the  map $G_1 \rightarrow S_n$ is a surjection of $G_1$ onto the symmetric group $S_n$. This epimorphism takes the generators of $G_1$ to transpositions in the symmetric group $S_n$ according to the order of the edges in the degeneration. We thus obtain a presentation of the fundamental group $\pi_1(X_{\text{Gal}})$ of the Galois cover, as the kernel of this epimorphism. Then we simplify the relations to produce a canonical presentation that identifies with $\pi_1(X_{\text{Gal}})$, using the theory of Coxeter covers of the symmetric groups.
We use a proposition from \cite{MoTe87}, as follows:
\begin{prop}
If $$\frac{G_1}{\{\prod_{j=1}^k \G_{j}'\G_{j}\}}\cong S_n,$$ then $X_{\text{Gal}}$ is simply-connected.
\end{prop}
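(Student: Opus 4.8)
The plan is to read the proposition directly off the Moishezon--Teicher exact sequence \eqref{M-T}, which presents $\pi_1(X_{\text{Gal}})$ as the kernel of the surjection $\theta\colon G_1\twoheadrightarrow S_n$ that sends each generator to its prescribed transposition. Thus proving that $X_{\text{Gal}}$ is simply-connected amounts to showing $\ker\theta=1$, equivalently that $\theta$ is an isomorphism; since $\theta$ is onto, this is the same as identifying $\ker\theta$ with the normal subgroup $N:=\langle\langle\,\prod_{j=1}^k\G_j'\G_j\,\rangle\rangle$ appearing in the hypothesis and checking that $N$ is trivial.

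First I would verify that the element $c:=\prod_{j=1}^k\G_j'\G_j$ lies in $\ker\theta$. This is the projective-relation word, the class of the total monodromy loop; since $\theta$ sends each $\G_j,\G_j'$ to a transposition, $\theta(c)$ is the product of these transpositions taken in the geometric order dictated by the degeneration, and this product is the identity of $S_n$ because the corresponding loop is nullhomotopic in $\mathbb{CP}^2$. Hence $\theta(c)=e$, so $N\subseteq\ker\theta$, and $\theta$ descends to a surjection $\bar\theta\colon G_1/N\twoheadrightarrow S_n$.

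Now I would invoke the hypothesis $G_1/N\cong S_n$. Then $\bar\theta$ is a surjection between finite groups both of order $n!$, hence a bijection, hence an isomorphism; its kernel $\ker\theta/N$ is therefore trivial, forcing $\ker\theta=N$. Finally, because $c$ is exactly the projective relation, which is one of the defining relations of $G=\pi_1(\mathbb{CP}^2-S)$ and hence holds in the quotient $G_1$, we have $c=e$ in $G_1$, so $N=\langle\langle e\rangle\rangle$ is trivial. Combining the last two facts gives $\pi_1(X_{\text{Gal}})=\ker\theta=N=1$, i.e. $X_{\text{Gal}}$ is simply-connected.

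The genuinely hard input --- the exactness of \eqref{M-T} and the identification of $\pi_1(X_{\text{Gal}})$ with $\ker\theta$ --- is imported from \cite{MoTe87}, so the remaining work is bookkeeping with finite groups. The main obstacle I anticipate is the compatibility check in the middle step: one must ensure that the abstract isomorphism supplied by the hypothesis is realized by $\bar\theta$ itself, that is, that the monodromy map $\theta$ and the surjection implicit in the hypothesis assign the \emph{same} transposition to each generator, in accordance with the order of the edges of the degeneration. Once this matching is secured, the finiteness of $S_n$ closes the argument at once.
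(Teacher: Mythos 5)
Your argument is sound, but note that the paper offers no proof of this proposition at all --- it is imported verbatim from Moishezon--Teicher \cite{MoTe87} --- so the comparison is against the standard justification rather than anything in the text. Your derivation from the exact sequence \eqref{M-T} is exactly that justification: $\pi_1(X_{\text{Gal}})=\ker\theta$, the projective word $c=\prod_j\Gamma_j'\Gamma_j$ lies in $\ker\theta$, and the hypothesis forces $G_1/N$ to have order $n!$, so the induced surjection onto $S_n$ is an isomorphism by counting. Two remarks. First, the ``compatibility check'' you flag at the end is not actually needed: your counting step uses only the \emph{order} of $G_1/N$, which any abstract isomorphism with $S_n$ supplies, so you never have to match the hypothesized isomorphism with $\bar\theta$ generator by generator --- your third paragraph already closes the argument. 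Second, in this paper's conventions $G=\pi_1(\mathbb{CP}^2-S)$ already carries the projective relation among its defining relations, so, as you observe, $c=e$ in $G_1$ and $N$ is trivial from the outset; the hypothesis is then literally $G_1\cong S_n$ and the proof collapses to a single application of the counting argument, which is precisely how the authors use the proposition in Section \ref{Rk-type} (where they note the projective relation ``is trivial in $G_1$''). The quotient by $c$ in the statement is only substantive in the affine formulation of \cite{MoTe87}, where $G$ is $\pi_1(\mathbb{C}^2-S)$ and $c$ is a genuinely nontrivial element of the kernel; under that reading your step ``$N=\langle\langle e\rangle\rangle$'' would not apply, but the rest of your argument ($\ker\theta=N$ and $\pi_1(X_{\text{Gal}})=\ker\theta/N$) still delivers the conclusion.
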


\section{Degeneration to the cone over $C_{R_k}$}\label{Rk-type}

In this section, we pay attention to a special planar Zappatic degeneration --- the degeneration to the cone over the stick curve $C_{R_k}$ in $\mathbb{CP}^{k+1}$.
It is clear that every plane arrangement can be represented by a triangulation
as long as no three planes meet in a line and no plane meets more than three other planes. In Figure \ref{abc}, we depict a  schematic representation of  $X_0$, which is a cone over the stick curve  $C_{R_k}$ in $\mathbb{CP}^{k+1}$. Each triangle corresponds to a plane $\mathbb{CP}^2$ and each  intersection of two triangles corresponds to a common edge between the two planes. The existence of such degeneration  can be found in Corollary \ref{cor}.

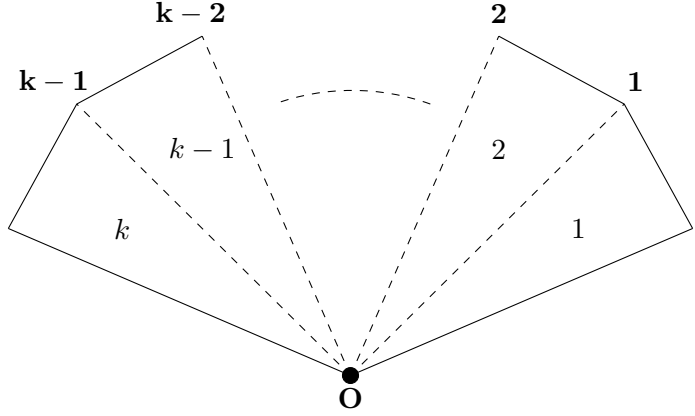
\begin{figure}[H]
	\begin{center}
		
			\definecolor{circ_col}{rgb}{0,0,0}
	\begin{tikzpicture}
	
	\begin{scope}[scale=1.5, xshift=200]
	
	\draw [fill=circ_col] (0,0) circle (2pt);
	\draw [-] (0,0) -- (3, 1.3);
	\draw [dashed] (0,0) -- (2.4, 2.4);
	\draw [dashed] (0,0) -- (1.3, 3);
	\draw [] (3, 1.3) -- (2.4, 2.4) -- (1.3, 3);
	\node[] at (2, 1.3) {1};
	\node[] at (1.3, 2) { 2};
	\node[] at (0, -0.2) {$\bf O$};
    \node[] at (2.5, 2.6) {$\bf 1$};
    \node[] at (1.3, 3.2) {$\bf 2$};
    \draw [-] (0,0) -- (-3, 1.3);
	\draw [dashed] (0,0) -- (-2.4, 2.4);
	\draw [dashed] (0,0) -- (-1.3, 3);
	\draw [-] (-3, 1.3) -- (-2.4, 2.4) -- (-1.3, 3);
	\node[] at (-2, 1.3) {$ k $};
	\node[] at (-1.3, 2) {$ k-1 $};
	 \node[] at (-2.6, 2.6) {$\bf k-1$};
    \node[] at (-1.4, 3.2) {$\bf k-2$};
	\draw [dashed] (0.7, 2.4) arc (70:110:2);
	
	\end{scope}
	
	\end{tikzpicture}
		
	\end{center}
	\setlength{\abovecaptionskip}{-0.15cm}
	\caption{A cone over $C_{R_k}$}\label{abc}
\end{figure}
\vspace{0.5cm}

We give now the following definition of an outer $(k-1)$-point, then we will explain how it relates to Figure \ref{abc}.

\begin{Def}\label{def:outer}
We call a $(k-1)$-point that is the intersection of $ k $ planes $ P_1,\dots,P_{k} $, where $ P_i $ intersects $ P_j $ in a line iff $ |i-j|=1 $, an \emph{outer $(k-1)$-point}. Especially noteworthy, a 1-point always comes from the intersection of 2 planes.
\end{Def}
	
Point $O$ in Figure \ref{abc} is  an outer $(k-1)$-point. We have also $k-1$ vertices that are 1-points. The branch curve $S_0$ is an arrangement of $k-1$ lines~(the dashed lines) that are the images of the $k-1$ edges through the generic  projection of $X_0$ onto $\mathbb{CP}^2$.

In Subsections \ref{6-type} and \ref{n+1-type}, we give the braids that are related to an  outer $5$-point and an outer $n$-point respectively. Then we can find the group $\pi_1(X_{\text{Gal}})$ and conclude the following theorem (see Theorem \ref{thm(n+1)point}):
{\em Let $\mathfrak{X}_k\rightarrow\Delta$ be a planar Zappatic degeneration, whose  central fiber $X_k$ is the cone over the stick curve $C_{R_k}$ in $\mathbb{CP}^{k+1}$ (for $k\geq 5$). Then the Galois cover of $X_k$ is a simply-connected surface of general type.}

In the following subsections, we follow the notations and formulations from \cite{degree6}. Before we continue to that part of the computations, we give some notations for simplicity and convenience, as follows:
we denote $\Gamma_j$ by $j$ and $\Gamma'_j$ by $j'$ in the group $G$; we use $B_{k}$ to denote the braid monodromy of an outer $k$-point; we write  $F_k$  instead of $\left(B_{k-1}\right)^{Z_{{(k-1)}\; {(k-1)}', k}^2}$, where $Z_{{(k-1)}\; {(k-1)}', k}^2$ is a full-twist of $k$ around
$k-1$ and $(k-1)'$; and we denote the following formula as $M_k$:
\begin{align*}
&M_k :=Z_{{(k-1)}\; {(k-1)}', k}^3 \cdot (Z_{1\; 1', k}^2)^{Z_{2\; 2', k}^{-2}\cdots Z_{{(k-2)}\; {(k-2)}', k}^{-2}} \cdot(Z_{2\; 2', k}^2)^{Z_{3\; 3', k}^{-2}\cdots Z_{{(k-2)}\; {(k-2)}', k}^{-2}}\\
&\cdot\cdot\cdot Z_{{(k-2)}\; {(k-2)}', k}^2\cdot{\bar{Z}}_{1\; 1', k'}^2 \cdot {\bar{Z}}_{2\; 2', k'}^2
\cdot\cdot\cdot{\bar{Z}}_{{(k-2)}\; {(k-2)}', k'}^2  \cdot (Z_{k\; k'})^{Z_{{(k-1)}\; {(k-1)}', k}^2},
& (k=1, 2 ,3 ,\cdots).
\end{align*}

\subsection{The cone over $C_{R_6}$}\label{6-type}

In \cite[Theorem 3.9]{AGTX} we have already considered the case of $k=5$. In order to help the reader better understand our proof, in this subsection we consider the case of $k=6$, see Figure \ref{5-point}.

\begin{figure}[H]
	\begin{center}
		
		\definecolor{circ_col}{rgb}{0,0,0}
		\begin{tikzpicture}[x=1cm,y=1cm,scale=1.8]
		
		\draw [fill=circ_col] (0,0) node [below] {$\bf O$} circle (1pt);
		\draw [fill=circ_col] (1,-1) node [anchor=west] {$\bf 7$} circle (1pt);
		\draw [fill=circ_col] (1,0) node [anchor=west] {$\bf 1$} circle (1pt);
		\draw [fill=circ_col] (1,1) node [anchor=west] {$\bf 2$} circle (1pt);
		\draw [fill=circ_col] (0,1) node [above] {$\bf 3$} circle (1pt);
		\draw [fill=circ_col] (-1,1) node [anchor=east] {$\bf 4$} circle (1pt);
		\draw [fill=circ_col] (-1,0) node [anchor=east] {$\bf 5$} circle (1pt);
		\draw [fill=circ_col] (-1,-1) node [anchor=east] {$\bf 6$} circle (1pt);
		
		\draw  (0,0) -- (1,-1) -- (1,0) -- (1,1) -- (0,1) -- (-1,1) -- (-1,0) -- (-1,-1) -- (0,0);
		
		\draw (0,0) -- node [below]{1} (1,0);
		\draw (0,0) -- node [above]{2} (1,1);
		\draw (0,0) -- node [anchor=east]{3} (0,1);
		\draw (0,0) -- node [above]{4} (-1,1);
		\draw (0,0) -- node [below]{5} (-1,0);

		\end{tikzpicture}
		
	\end{center}
	\setlength{\abovecaptionskip}{-0.15cm}
	\caption{A cone over  $C_{R_6}$}\label{5-point}
\end{figure}
\vspace{0.5cm}

\begin{prop}\label{thm6point}
 \rm  Let $\mathfrak{X}_6\rightarrow\Delta$ be a  planar Zappatic degeneration whose central fiber $X_6$ is the cone over the stick curve  $C_{R_6}$ in $\mathbb{CP}^{7}$. Then the Galois cover $X_{6,\text{Gal}}$ of  $X_6$ is a simply-connected surface.
\end{prop}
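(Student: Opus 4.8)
The plan is to follow the braid-monodromy and van Kampen machinery set up in Section \ref{method}, specializing it to the outer $5$-point geometry of the cone over $C_{R_6}$ depicted in Figure \ref{5-point}. First I would write down the full braid monodromy $B_6$ of the branch curve $S$ obtained by regenerating the line arrangement $S_0$ coming from the five dashed edges emanating from the outer $5$-point $O$. Using the inductive notation of the excerpt, this means expressing $B_6$ in terms of $F_6 = \left(B_5\right)^{Z_{5\;5',6}^2}$ together with the factor $M_6$, so that the new plane labeled $7$ and its regenerated relations are glued onto the already-understood case $k=5$ (treated in \cite[Theorem 3.9]{AGTX}). The geometry guarantees that plane $P_i$ meets $P_j$ in a line exactly when $|i-j|=1$, so all intersections are nodes except along the chain, and the regeneration produces the usual cusps, nodes, and branch points whose local braids $Z_{j\,j'}$, $Z^2_{i,j\,j'}$, $Z^3_{i,j\,j'}$ are listed in Section \ref{method}.

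Next, applying the van Kampen Theorem \cite{vk} to $B_6$, I would read off the presentation of $G=\pi_1(\mathbb{CP}^2-S)$ in the generators $1,1',\dots,7,7'$ (using the abbreviation $\Gamma_j \mapsto j$, $\Gamma_j' \mapsto j'$), with a commutator relation for each node, a triple relation $\langle \G_i,\G_j\rangle = e$ for each cusp, an equality $\G_j = \G_j'$ for each branch point, and finally the projective relation $\prod_{j=m}^1 \G_j'\G_j = e$. I would then pass to $G_1 = G/\langle \Gamma_j^2, \Gamma_j'^2\rangle$ and invoke the exact sequence \eqref{M-T}, together with the epimorphism $G_1 \to S_7$ sending each generator to the transposition prescribed by the order of the edges in the degeneration. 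The goal is to verify the hypothesis of the cited Proposition from \cite{MoTe87}, namely that $G_1 / \{\prod_{j=1}^k \G_j'\G_j\} \cong S_7$, which immediately yields that $X_{6,\text{Gal}}$ is simply-connected.

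The main technical step, and the one I expect to be the chief obstacle, is the simplification of the relations: showing that after imposing $\Gamma_j^2 = \Gamma_j'^2 = e$ and the branch-point identifications $\Gamma_j = \Gamma_j'$, the cusp and node relations collapse exactly to the Coxeter-type relations of $S_7$ (each generator an involution, adjacent generators satisfying the braid relation $\langle i,j\rangle = e$, and non-adjacent ones commuting), with no extra relations surviving and no nontrivial kernel beyond what the quotient by the projective relation kills. Here the inductive structure is essential: because $F_6$ carries the relations of the $k=5$ case intact under the conjugation by $Z_{5\,5',6}^2$, I would argue that adjoining the single new vertex contributes only the relations encoded in $M_6$, and then check by direct computation that these are consistent with the symmetric-group presentation on the enlarged generating set. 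The delicate point is controlling the conjugated braids in $M_6$ — the conjugations by the various $Z_{i\,i',6}^{-2}$ — and confirming that modulo the squared-generator relations they reduce to the plain braid and commutator relations, so that the map to $S_7$ is an isomorphism on the relevant quotient.

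Finally, once simple-connectedness is established via the $S_7$ isomorphism, I would note that the surface-of-general-type conclusion (when eventually combined across the section) follows from the positivity of the relevant numerical invariants of the Zappatic degeneration, as recorded in \cite{C-C-F-M-2, C-C-F-M-3, C-C-F-M-4}; for this proposition proper, the stated claim is only simple-connectedness, so the computation of $\pi_1(X_{6,\text{Gal}}) = \{e\}$ through the quotient isomorphism is the complete target.
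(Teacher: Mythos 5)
Your overall strategy is the one the paper uses---regenerate the line arrangement $S_0$, compute the braid monodromy of the outer point inductively, apply the van Kampen Theorem, pass to $G_1=G/\langle \G_j^2,{\G'_j}^2\rangle$, identify $G_1$ with a symmetric group, and conclude via the exact sequence \eqref{M-T} and the cited proposition from \cite{MoTe87}. However, there is a systematic off-by-one error in your indexing that would derail the actual computation. For the cone over $C_{R_6}$ there are $6$ planes and only $5$ double lines (the five edges through $O$ in Figure \ref{5-point}; the bold labels $\mathbf{1},\dots,\mathbf{7}$ are vertices, not lines). Hence the vertex $O$ is an outer $5$-point whose braid monodromy is $B_5=M_5\cdot F_5$ with $F_5=\left(B_4\right)^{Z^2_{4\,4',5}}$, the generators of $G$ are $1,1',\dots,5,5'$, the generic projection has degree $6$, and the target of the epimorphism is $S_6$. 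Your $B_6$, $F_6=\left(B_5\right)^{Z^2_{5\,5',6}}$, $M_6$, generators up to $7,7'$, and the claimed isomorphism $G_1/\{\prod \G'_j\G_j\}\cong S_7$ all describe the \emph{next} case, the cone over $C_{R_7}$: the braid $Z^2_{5\,5',6}$ is a full twist about a sixth line that does not exist here, and verifying a quotient isomorphic to $S_7$ would be verifying the wrong statement. Relatedly, the inductive input from \cite[Theorem 3.9]{AGTX} (the $k=5$ case) enters as $B_4$ conjugated by $Z^2_{4\,4',5}$, not as $B_5$ conjugated by $Z^2_{5\,5',6}$.

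Once the indices are corrected, the rest of your outline matches the paper's proof: the five outer $1$-points give $j=j'$ for $j=1,\dots,5$; the three blocks of relations coming from $M_5$, $(M_4)^{Z^2_{4\,4',5}}$, and $(B_3)^{Z^2_{3\,3',4}Z^2_{4\,4',5}}$ collapse, modulo the squared generators, to the Coxeter presentation $\langle i,i+1\rangle=e$ and $[i,j]=e$ for $|i-j|\ge 2$ on the five generators; the projective relation becomes trivial in $G_1$; and $G_1\cong S_6$ forces $\pi_1(X_{6,\text{Gal}})=\{e\}$ by \eqref{M-T}. One last small point: the proposition claims only simple-connectedness, and the general-type assertion elsewhere in the section is obtained from the Chern number formula $c_1^2(X_{\text{Gal}})=\frac{d!}{4}(m-6)^2$ of Proposition \ref{chern}, not from the invariants of the Zappatic degeneration in \cite{C-C-F-M-2,C-C-F-M-3,C-C-F-M-4}.
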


\begin{proof}
\setlength{\parindent}{1em}
The branch curve $S_0$ in $\mathbb{CP}^2$ is an arrangement of five lines, see Figure \ref{5-point}. We regenerate each vertex in turn  and compute the group $G_1$.

First, each of the vertices $i$ is an outer 1-point (for $i=1,\dots,5$) that regenerates to a conic; this gives rise to the braids $Z_{j\;j'}$ for $j=1,\dots,5$. We have the following relations in $G$ and also in $G_1$:
\begin{equation}\label{eq6.0}
1=1',\,2=2',\,3=3',\,4=4',\,5=5'.
\end{equation}
We will use the relations in (\ref{eq6.0}) as a prerequisite when we simplify relations (\ref{eq5.1})--(\ref{eq5.20}) in $G$.

Vertex $O$ is an outer $5$-point, and the related braids appear in $B_{5}$, as follows:
\begin{align*}
B_5=M_5 \cdot F_5
=M_5 \cdot (M_4)^{Z_{4\; 4', 5}^2} \cdot (B_3)^{Z_{3\; 3', 4}^2 Z_{4\; 4', 5}^2 },
\end{align*}
where
\begin{align*}
M_5= &Z_{4\; 4', 5}^3 \cdot (Z_{1\; 1', 5}^2)^{Z_{2\; 2', 5}^{-2} Z_{3\; 3', 5}^{-2}} \cdot
(Z_{2\; 2', 5}^2)^{Z_{3\; 3', 5}^{-2}} \cdot Z_{3\; 3', 5}^2 \cdot {\bar{Z}}_{1\; 1', 5'}^2 \cdot {\bar{Z}}_{2\; 2', 5'}^2 \cdot {\bar{Z}}_{3\; 3', 5'}^2  \cdot  (Z_{5\; 5'})^{Z_{4\; 4', 5}^2},\\
\end{align*}
and
\begin{align*}
F_5 = & \left(B_4\right)^{Z_{4\; 4', 5}^2}=(M_4)^{Z_{4\; 4', 5}^2}\cdot (F_4)^{Z_{4\; 4', 5}^2}\\
=&\left((Z_{3\; 3', 4}^3)\cdot(Z_{1\; 1', 4}^2)^{Z_{2\; 2', 4}^{-2}}\cdot(Z_{2\; 2', 4}^2)\cdot({\bar{Z}}_{1\; 1', 4'}^2)\cdot({\bar{Z}}_{2\; 2', 4'}^2)\cdot(Z_{4\; 4'})^{Z_{3\; 3', 4}^2}\right)^{Z_{4\; 4', 5}^2}\cdot (F_4)^{Z_{4\; 4', 5}^2}\\
=&(Z_{3\; 3', 4}^3)^{Z_{4\; 4', 5}^2}\cdot (Z_{1\; 1', 4}^2)^{Z_{2\; 2', 4}^{-2} Z_{4\; 4', 5}^2} \cdot (Z_{2\; 2', 4}^2)^{Z_{4\; 4', 5}^2} \cdot ({\bar{Z}}_{1\; 1', 4'}^2)^{Z_{4\; 4', 5}^2} \\
& \cdot({\bar{Z}}_{2\; 2', 4'}^2)^{Z_{4\; 4', 5}^2} \cdot  (Z_{4\; 4'})^{Z_{3\; 3', 4}^2 Z_{4\; 4', 5}^2} \cdot Z_{1', 2 \; 2'}^3 \cdot(Z_{1\; 1'})^{Z_{1', 2\; 2'}^2}  \\
& \cdot (Z_{2\; 2', 3}^3)^{Z_{1', 2\; 2'}^2 Z_{3\; 3', 4}^2 Z_{4\; 4', 5}^2} \cdot  (Z_{3\; 3'})^{Z_{2\; 2', 3}^2 Z_{1', 2\; 2'}^2 Z_{3\; 3', 4}^2 Z_{4\; 4', 5}^2} \cdot (Z_{1\; 1', 3\; 3'}^2)^{Z_{3\; 3' , 4}^2 Z_{4\; 4', 5}^2}.
\end{align*}
The braid $(Z_{5\; 5'})^{Z_{4\; 4', 5}^2}$ is depicted in the following picture:

\begin{figure}[ht]
\begin{center}
\scalebox{0.70}{\includegraphics{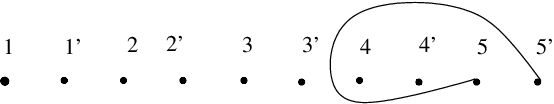}}
\end{center} \caption{The braid of $(Z_{5\; 5'})^{Z_{4\; 4', 5}^2}$}\label{7}
\end{figure}

The braids of $B_5$ give rise to three parts of relations in $G$. We will write down the
first part of the relations, which are the relations of braids of $M_5$:
\begin{equation}\label{eq5.1}
\langle 4,5\rangle=\langle 4',5\rangle=\langle 4^{-1}4'4,5\rangle=e,
\end{equation}
\begin{equation}\label{eq5.2}
[3'32'212^{-1}{2'}^{-1}3^{-1}{3'}^{-1},5]=[3'32'21'2^{-1}{2'}^{-1}3^{-1}{3'}^{-1},5]=e,
\end{equation}
\begin{equation}\label{eq5.3}
[3'323^{-1}{3'}^{-1},5]=[3'32'3^{-1}{3'}^{-1},5]=e,
\end{equation}
\begin{equation}\label{eq5.4}
[3,5]=[3',5]=e,
\end{equation}
\begin{equation}\label{eq5.5}
[4'43'32'212^{-1}{2'}^{-1}3^{-1}{3'}^{-1}4^{-1}{4'}^{-1},5^{-1}5'5]=
[4'43'32'21'2^{-1}{2'}^{-1}3^{-1}{3'}^{-1}4^{-1}{4'}^{-1},5^{-1}5'5]=e,
\end{equation}
\begin{equation}\label{eq5.6}
[4'43'323^{-1}{3'}^{-1}4^{-1}{4'}^{-1},5^{-1}5'5]=
[4'43'32'3^{-1}{3'}^{-1}4^{-1}{4'}^{-1},5^{-1}5'5]=e,
\end{equation}
\begin{equation}\label{eq5.7}
[4'434^{-1}{4'}^{-1},5^{-1}5'5]=[4'43'4^{-1}{4'}^{-1},5^{-1}5'5]=e,
\end{equation}
\begin{equation}\label{eq5.8}
5'=54'454^{-1}{4'}^{-1}5^{-1}.
\end{equation}
We simplify relations (\ref{eq5.1})--(\ref{eq5.8}), then get the following relations in $G_1$: $$\langle 4, 5\rangle=[1,5]=[2,5]=[3,5]=e.$$

Now we give the relations in $G$ of braids from $(M_4)^{Z_{4\; 4', 5}}$:
\begin{equation}\label{eq5.9}
\langle3,545^{-1}\rangle=\langle3',545^{-1}\rangle=\langle3^{-1}3'3,545^{-1}\rangle=e,
\end{equation}
\begin{equation}\label{eq5.10}
[2'212^{-1}{2'}^{-1},545^{-1}]=[2'21'2^{-1}{2'}^{-1},545^{-1}]=e,
\end{equation}
\begin{equation}\label{eq5.11}
[2,545^{-1}]=[2',545^{-1}]=e,
\end{equation}
\begin{equation}\label{eq5.12}
[3'32'212^{-1}{2'}^{-1}3^{-1}{3'}^{-1},54^{-1}4'45^{-1}]=[3'32'21'2^{-1}{2'}^{-1}3^{-1}{3'}^{-1},54^{-1}4'45^{-1}]=e,
\end{equation}
\begin{equation}\label{eq5.13}
[3'323^{-1}{3'}^{-1},54^{-1}4'45^{-1}]=[3'32'3^{-1}{3'}^{-1},54^{-1}4'45^{-1}]=e,
\end{equation}
\begin{equation}\label{eq5.14}
3^{-1}{3'}^{-1}54^{-1}4'45^{-1}3'3=545^{-1}.
\end{equation}
We simplify (\ref{eq5.9})--(\ref{eq5.14}), using  the  relations of $M_5$. We obtain the following relations in $G_1$:
$$\langle 3, 4\rangle=[1,4]=[2,4]=e.$$

We write down the relations in $G$ that are associated with the braids in $(B_3)^{Z_{3\; 3', 4}^2 Z_{4\; 4', 5}^2 }$; the elements of $G$ will appear with conjugations, according to the conjugation on $B_3$, as follows:
$$
\begin{aligned}
&3\rightarrow 434^{-1},~ 3'\rightarrow 43'4^{-1},~ 3^{-1}\rightarrow 43^{-1}4^{-1},~ 3'^{-1}\rightarrow 43'^{-1}4^{-1}; \\
&4\rightarrow 545^{-1},~ 4'\rightarrow 54'5^{-1},~ 4^{-1}\rightarrow 54^{-1}5^{-1},~ 4'^{-1}\rightarrow 54'^{-1}5^{-1}.
\end{aligned}
$$
We get in $G$ the relations:
\begin{equation}\label{eq5.15}
\langle 1',2\rangle=\langle 1',2'\rangle=\langle 1',2^{-1}2'2\rangle=e,
\end{equation}
\begin{equation}\label{eq5.16}
1=2'21'2^{-1}{2'}^{-1},
\end{equation}
\begin{equation}\label{eq5.17}
\begin{aligned}
&\langle 2'21'2{1'}^{-1}2^{-1}{2'}^{-1},545^{-1}354^{-1}5^{-1}\rangle=e,\\
&\langle 2'21'2'{1'}^{-1}2^{-1}{2'}^{-1},545^{-1}354^{-1}5^{-1}\rangle=e,\\
&\langle 2'21'2^{-1}2'2{1'}^{-1}2^{-1}{2'}^{-1},545^{-1}354^{-1}5^{-1}\rangle=e,
\end{aligned}
\end{equation}
\begin{equation}\label{eq5.18}
3=54^{-1}5^{-1}2'21'2^{-1}{2'}^{-1}{1'}^{-1}2^{-1}{2'}^{-1}545^{-1}3^{-1}3'3
54^{-1}5^{-1}2'21'2'2{1'}^{-1}2^{-1}{2'}^{-1}545^{-1},
\end{equation}
\begin{equation}\label{eq5.19}
\begin{aligned}
&[1,545^{-1}354^{-1}5^{-1}]=[1',545^{-1}354^{-1}5^{-1}]=e,\\
&[1,545^{-1}3'54^{-1}5^{-1}]=[1',545^{-1}3'54^{-1}5^{-1}]=e.
\end{aligned}
\end{equation}
We simplify (\ref{eq5.15})--(\ref{eq5.19}), using the ones from $M_5$ and $(M_4)^{Z_{4\; 4', 5}}$, and get the following relations in $G_1$:
$$\langle 1, 2\rangle=\langle 2, 3\rangle=[1,3]=e.$$

Moreover, the projective relation
\begin{equation}\label{eq5.20}
5'54'43'32'21'1=e,
\end{equation}
is trivial in $G_1$.

We summarize the relations in $G_1$, as  follows:
\begin{itemize}
\item[(1)] triple relations
\begin{equation}
\langle 1,2\rangle=\langle 2,3\rangle=\langle 3, 4\rangle=\langle 4, 5\rangle=e.
\end{equation}
\item[(2)] commutative relations
\begin{equation}
[1,3]=[1,4]=[2,4]=[1,5]=[2,5]=[3,5]=e.
\end{equation}
\end{itemize}

It is easy to see that $\{1,2,3,4,5\}$ are the generators of $G_1$.
These relations are the same as the relations in $S_6$, hence $G_1\cong S_6$. It follows that $\pi_1(X_{6,\text{Gal}})$ is trivial, and the Galois cover  of  $X_6$ is a simply-connected surface.

\end{proof}

In the next  subsection, we will prove the general theorem (Theorem \ref{thm(n+1)point}) by using the same method. The reader can discern and follow the inductive steps in the transition from Subsection  \ref{6-type} to Subsection \ref{n+1-type}.

\subsection{The cone over $C_{R_{n+1}}$}\label{n+1-type}
In this subsection, we study the fundamental group of the Galois cover of a  Zappatic degeneration   whose central fiber is the cone over the stick curve  $C_{R_k}$ in $\mathbb{CP}^{k+1}$.
In order to further clarify the expressions in the proof, we set $k=n+1$, see Figure \ref{n-points}.
We then have the following general theorem:
 \begin{thm}\label{thm(n+1)point}
Let $\mathfrak{X}_{n+1}\rightarrow\Delta$ be a planar Zappatic degenerations whose  central fibers $X_{n+1}$ is a cone over a stick curve  $C_{R_{n+1}}$ in $\mathbb{CP}^{n+2}$. Then the Galois cover  of  $X_{n+1}$ is simply-connected surfaces.
\end{thm}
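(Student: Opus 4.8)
The plan is to imitate the computation of Proposition \ref{thm6point} verbatim, replacing the specific value $5$ by the general parameter $n$ and organizing the braids of the outer $n$-point via the recursive structure $B_n = M_n \cdot F_n = M_n \cdot (M_{n-1})^{Z_{{(n-1)}\,{(n-1)}',n}^2}\cdots (B_3)^{(\cdots)}$. First I would set up the generators: each of the $n$ outer $1$-points regenerates to a conic, producing the braids $Z_{j\,j'}$ and hence the identifications $j=j'$ for $j=1,\dots,n$ in $G_1$; these serve as standing hypotheses when simplifying all later relations. Then I would unwind $B_n$ layer by layer, exactly as the $k=6$ case unwinds $B_5$ into the blocks coming from $M_5$, $(M_4)^{Z_{4\,4',5}^2}$, and $(B_3)^{Z_{3\,3',4}^2 Z_{4\,4',5}^2}$.

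\textbf{Extracting the relations.} The key observation, visible already in the $k=6$ computation, is that each factor $M_j$ (after the preceding simplifications) contributes exactly one triple relation $\langle j-1,\, j\rangle = e$ together with the commutators $[i,j]=e$ for $i=1,\dots,j-2$, while the branch-point generators $Z_{j\,j'}$ collapse to $j=j'$ and disappear. Concretely, the block from $M_n$ gives $\langle n-1,n\rangle=e$ and $[i,n]=e$ for $i=1,\dots,n-2$; the block from $(M_{n-1})^{Z_{{(n-1)}\,{(n-1)}',n}^2}$ gives $\langle n-2,n-1\rangle=e$ and $[i,n-1]=e$ for $i=1,\dots,n-3$; and so on down to the $(B_3)$-block, which supplies $\langle 1,2\rangle=\langle 2,3\rangle=[1,3]=e$. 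I would prove this uniform shape by an induction on $j$, using the relations already obtained from $M_{j+1},\dots,M_n$ to cancel the conjugating braids $Z^{\pm2}$, since those conjugators involve only generators already known to commute with or satisfy triple relations against the index $j$. The projective relation $\prod n'\,n\cdots 1'\,1 = e$ becomes trivial in $G_1$ once the identifications $j=j'$ and the squaring $j^2=e$ are imposed.

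\textbf{Identifying the group.} Collecting everything, $G_1$ is generated by $1,2,\dots,n$ subject to $\langle j,j+1\rangle=e$ for $1\le j\le n-1$ and $[i,j]=e$ whenever $|i-j|\ge 2$, together with $j^2=e$. This is exactly the Coxeter presentation of the symmetric group $S_{n+1}$ with the $n$ generators mapping to the adjacent transpositions $(j,\,j{+}1)$, where adjacency is dictated by the chain ordering of the edges in the degeneration $C_{R_{n+1}}$. Hence $G_1 \cong S_{n+1}$, and by the proposition quoted from \cite{MoTe87} the quotient $G_1/\{\prod \G_j'\G_j\}\cong S_{n+1}$ forces $\pi_1(X_{n+1,\text{Gal}})$ to be trivial, so the Galois cover is simply-connected.

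\textbf{Main obstacle.} The genuinely delicate point is \emph{not} the final group identification, which is routine Coxeter theory, but the inductive claim that each conjugated block $(M_j)^{(\cdots)}$ simplifies to precisely one triple relation plus the expected commutators. The conjugating factors are long products of full-twists $Z^{\pm 2}_{\bullet}$, and one must verify that, modulo the relations already secured at earlier stages, every such conjugation acts trivially on the generators appearing inside the block. In the $k=6$ case this is checked by hand through the chains (\ref{eq5.9})--(\ref{eq5.19}); for general $n$ the hard part is packaging these cancellations into a clean induction so that the pattern ``$M_j \rightsquigarrow \langle j-1,j\rangle$ and $[i,j]$ for $i<j-1$'' is shown to persist without writing out each conjugation explicitly. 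I would isolate this as a lemma about how $Z^2$-conjugation interacts with generators that already commute, and then feed it into the induction.
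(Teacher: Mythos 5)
Your proposal follows essentially the same route as the paper's proof: the same recursive decomposition $B_n = M_n\cdot(M_{n-1})^{Z^2}\cdots(B_3)^{(\cdots)}$, the same block-by-block extraction of one triple relation $\langle j-1,j\rangle=e$ plus commutators $[i,j]=e$ for $i<j-1$ from each conjugated $M_j$, and the same identification of $G_1$ with the Coxeter presentation of $S_{n+1}$. The inductive packaging you flag as the main obstacle is exactly the step the paper handles by writing out the relations for the first two blocks and the $B_3$ block explicitly and asserting ``continuing this process'' for the intermediate ones, so your proposed lemma would if anything make that step more explicit than the published argument.
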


\begin{figure}[H]
	\begin{center}
		
			\definecolor{circ_col}{rgb}{0,0,0}
	\begin{tikzpicture}
	
	\begin{scope}[scale=1.5, xshift=200]
	
	\draw [fill=circ_col] (0,0) circle (2pt);
	\draw [-] (0,0) -- (3, 1.3);
	\draw [-] (0,0) -- (2.4, 2.4);
	\draw [-] (0,0) -- (1.3, 3);
	\draw [-] (3, 1.3) -- (2.4, 2.4) -- (1.3, 3);
	\node[] at (2, 1.3) {1};
	\node[] at (1.3, 2) {2};
	\node[] at (0, -0.2) {$\bf O$};
    \node[] at (2.5, 2.5) {$\bf 1$};
    \node[] at (1.4, 3.12) {$\bf 2$};
     \node[] at (-2.55, 2.5) {$\bf n$};
    \node[] at (-1.37, 3.15) {$\bf n-1$};
	\draw [-] (0,0) -- (-3, 1.3);
	\draw [-] (0,0) -- (-2.4, 2.4);
	\draw [-] (0,0) -- (-1.3, 3);
	\draw [-] (-3, 1.3) -- (-2.4, 2.4) -- (-1.3, 3);
	\node[] at (-2, 1.3) {$ n+1 $};
	\node[] at (-1.3, 2) {$ n $};
	
	\draw [dashed] (0.7, 2.4) arc (70:110:2);
	
	\end{scope}
	
	\end{tikzpicture}
		
	\end{center}
	\setlength{\abovecaptionskip}{-0.15cm}
	\caption{A cone over $C_{R_{n+1}}$}\label{n-points}
\end{figure}
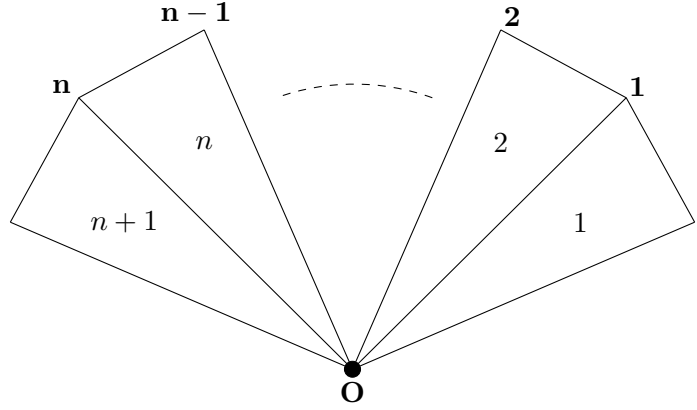
\vspace{0.5cm}

\begin{proof}

First, each of the vertices $i$ is an outer 1-point, for $i=1,\dots, n$, that regenerates to a conic, giving rise to the braids $Z_{j\;j'}$ for $j=1,\dots,n$. We have the following relations in $G$ and also in $G_1$:
\begin{equation}\label{eq8.0}
1=1', 2=2',\dots, n=n'.
\end{equation}
We will use the relations in (\ref{eq8.0}) as a prerequisite when we simplify the following $(n-2)$ parts of the
relations and the projective relation in $G$.

Vertex $O$ is an outer $n$-point, and the related braids are:
\begin{align*}
B_{n}=&M_n \cdot F_n\\
=&M_n \cdot (M_{n-1})^{Z_{{(n-1)}\; {(n-1)}', n}^2} \cdot (M_{n-2})^{Z_{{(n-2)}\; {(n-2)}', {(n-1)}}^2 Z_{{(n-1)}\; {(n-1)}', n}^2}\\
&\cdots (M_5)^{Z_{5\; 5', 6}^2 Z_{6\; 6', 7}^2 \cdots Z_{{(n-1)}\; {(n-1)}', n}^2}\cdot (M_4)^{Z_{4\; 4', 5}^2 Z_{5\; 5', 6}^2 \cdots Z_{{(n-1)}\; {(n-1)}', n}^2}\\
&\cdot (B_3)^{Z_{3\; 3', 4}^2 Z_{4\; 4', 5}^2 \cdots Z_{{(n-1)}\; {(n-1)}', n}^2}.
\end{align*}

In $G$, the braids of $B_{n}$ relate to $(n-2)$ parts of  relations.
The relations from braids in $M_n$ will be listed as the first part:
\begin{equation}\label{eq8.1}
\langle n-1,n\rangle=\langle (n-1)',n\rangle=\langle (n-1)^{-1}(n-1)'(n-1),n\rangle=e,
\end{equation}
\begin{equation}\label{eq8.2}
\begin{aligned}
&[(n-2)'(n-2)\cdots2'212^{-1}{2'}^{-1}\cdots(n-2)^{-1}{(n-2)'}^{-1},n]=e,\\
&[(n-2)'(n-2)\cdots2'21'2^{-1}{2'}^{-1}\cdots(n-2)^{-1}{(n-2)'}^{-1},n]=e,
\end{aligned}
\end{equation}
\begin{equation}\label{eq8.3}
\begin{aligned}
&[(n-2)'(n-2)\cdots3'323^{-1}{3'}^{-1}\cdots(n-2)^{-1}{(n-2)'}^{-1},n]=e,\\
&[(n-2)'(n-2)\cdots3'32'3^{-1}{3'}^{-1}\cdots(n-2)^{-1}{(n-2)'}^{-1},n]=e,
\end{aligned}
\end{equation}
\begin{align*}
&\cdots\cdots\cdots\cdots
\end{align*}
\begin{equation}\label{eq8.4}
\begin{aligned}
&[(n-2)'(n-2)(n-3)(n-2)^{-1}{(n-2)'}^{-1},n]=e,\\
&[(n-2)'(n-2)(n-3)'(n-2)^{-1}{(n-2)'}^{-1},n]=e,
\end{aligned}
\end{equation}
\begin{equation}\label{eq8.5}
[(n-2),n]=[(n-2)',n]=e,
\end{equation}
\begin{equation}\label{eq8.6}
\begin{aligned}
&[(n-1)'(n-1)\cdots2'212^{-1}{2'}^{-1}\cdots(n-1)^{-1}{(n-1)'}^{-1},n^{-1}n'n]=e,\\
&[(n-1)'(n-1)\cdots2'21'2^{-1}{2'}^{-1}\cdots(n-1)^{-1}{(n-1)'}^{-1},n^{-1}n'n]=e,
\end{aligned}
\end{equation}
\begin{equation}\label{eq8.7}
\begin{aligned}
&[(n-1)'(n-1)\cdots3'323^{-1}{3'}^{-1}\cdots(n-1)^{-1}{(n-1)'}^{-1},n^{-1}n'n]=e,\\
&[(n-1)'(n-1)\cdots3'32'3^{-1}{3'}^{-1}\cdots(n-1)^{-1}{(n-1)'}^{-1},n^{-1}n'n]=e,
\end{aligned}
\end{equation}
\begin{align*}
&\cdots\cdots\cdots\cdots
\end{align*}
\begin{equation}\label{eq8.8}
\begin{aligned}
&[(n-1)'(n-1)(n-2)(n-1)^{-1}{(n-1)'}^{-1},n^{-1}n'n]=e,\\
&[(n-1)'(n-1)(n-2)'(n-1)^{-1}{(n-1)'}^{-1},n^{-1}n'n]=e,
\end{aligned}
\end{equation}
\begin{equation}\label{eq8.9}
n'=n(n-1)'(n-1)n(n-1)^{-1}{(n-1)'}^{-1}n^{-1}.
\end{equation}
We simplify (\ref{eq8.1})--(\ref{eq8.9}), then get in $G_1$ the following relations:
\begin{itemize}
\item[(1)] triple relation
\begin{equation}
\langle n-1,n\rangle=e,
\end{equation}
\item[(2)] commutative relations
\begin{equation}
[1,n]=[2,n]=\cdots=[n-2,n]=e.
\end{equation}
\end{itemize}

The relations that are associated with the braids of $(M_{n-1})^{Z_{{(n-1)}\; {(n-1)}', n}^2}$ as the second part of the relations in $G$, are as follows:
\begin{equation}\label{eq8.10}
\begin{aligned}
&\langle n-2,n(n-1)n^{-1}\rangle=\langle (n-2)',n(n-1)n^{-1} \rangle=e,\\
&\langle (n-2)^{-1}(n-2)'(n-2),n(n-1)n^{-1}\rangle=e,
\end{aligned}
\end{equation}
\begin{equation}\label{eq8.11}
\begin{aligned}
&[(n-3)'(n-3)\cdots2'212^{-1}{2'}^{-1}\cdots(n-3)^{-1}{(n-3)'}^{-1},n(n-1)n^{-1}]=e,\\
&[(n-3)'(n-3)\cdots2'21'2^{-1}{2'}^{-1}\cdots(n-3)^{-1}{(n-3)'}^{-1},n(n-1)n^{-1}]=e,
\end{aligned}
\end{equation}
\begin{equation}\label{eq8.12}
\begin{aligned}
&[(n-3)'(n-3)\cdots3'323^{-1}{3'}^{-1}\cdots(n-3)^{-1}{(n-3)'}^{-1},n(n-1)n^{-1}]=e,\\
&[(n-3)'(n-3)\cdots3'32'3^{-1}{3'}^{-1}\cdots(n-3)^{-1}{(n-3)'}^{-1},n(n-1)n^{-1}]=e,
\end{aligned}
\end{equation}
\begin{align*}
&\cdots\cdots\cdots\cdots
\end{align*}
\begin{equation}\label{eq8.13}
\begin{aligned}
&[(n-3)'(n-3)(n-2)(n-3)^{-1}{(n-3)'}^{-1},n(n-1)n^{-1}]=e,\\
&[(n-3)'(n-3)(n-2)'(n-3)^{-1}{(n-3)'}^{-1},n(n-1)n^{-1}]=e,
\end{aligned}
\end{equation}
\begin{equation}\label{eq8.14}
[(n-3),n(n-1)n^{-1}]=[(n-3)',n(n-1)n^{-1}]=e,
\end{equation}
\begin{equation}\label{eq8.15}
\begin{aligned}
&[(n-2)'(n-2)\cdots2'212^{-1}{2'}^{-1}\cdots(n-2)^{-1}{(n-2)'}^{-1},n{(n-1)}^{-1}(n-1)'(n-1)n^{-1}]=e,\\
&[(n-2)'(n-2)\cdots2'21'2^{-1}{2'}^{-1}\cdots(n-2)^{-1}{(n-2)'}^{-1},n{(n-1)}^{-1}(n-1)'(n-1)n^{-1}]=e,
\end{aligned}
\end{equation}
\begin{equation}\label{eq8.16}
\begin{aligned}
&[(n-2)'(n-2)\cdots3'323^{-1}{3'}^{-1}\cdots(n-2)^{-1}{(n-2)'}^{-1},n{(n-1)}^{-1}(n-1)'(n-1)n^{-1}]=e,\\
&[(n-2)'(n-2)\cdots3'32'3^{-1}{3'}^{-1}\cdots(n-2)^{-1}{(n-2)'}^{-1},n{(n-1)}^{-1}(n-1)'(n-1)n^{-1}]=e,
\end{aligned}
\end{equation}
\begin{align*}
&\cdots\cdots\cdots\cdots
\end{align*}
\begin{equation}\label{eq8.17}
\begin{aligned}
&[(n-2)'(n-2)(n-3)(n-2)^{-1}{(n-2)'}^{-1},n{(n-1)}^{-1}(n-1)'(n-1)n^{-1}]=e,\\
&[(n-2)'(n-2)(n-3)'(n-2)^{-1}{(n-2)'}^{-1},n{(n-1)}^{-1}(n-1)'(n-1)n^{-1}]=e,
\end{aligned}
\end{equation}
\begin{equation}\label{eq8.18}
\begin{aligned}
{(n-2)}^{-1}{(n-2)'}^{-1}n{(n-1)}^{-1}(n-1)'(n-1)n^{-1}(n-2)'(n-2)=n(n-1)n^{-1}.
\end{aligned}
\end{equation}

Using the relations that are associated with $M_n$ to simplify (\ref{eq8.10})--(\ref{eq8.18}), we obtain the following relations in the group $G_1$:
\begin{itemize}
\item[(1)]triple relation
\begin{equation}
\langle n-2,n-1\rangle=e,
\end{equation}
\item[(2)] commutative relations
\begin{equation}
[1,n-1]=[2,n-1]=[n-3,n-1]=e.
\end{equation}
\end{itemize}

Similarly,  we can get the third part of the relations that relate to $(M_{n-2})^{Z_{{(n-2)}\; {(n-2)}', {(n-1)}}^2 Z_{{(n-1)}\; {(n-1)}', n}^2}$, then use the relations of $M_n$ and $(M_{n-1})^{Z_{{(n-1)}\; {(n-1)}', n}^2}$ to simplify them. We get the following relations in the group $G_1$:
\begin{itemize}
\item[(1)]triple relation
\begin{equation}
\langle n-3,n-2\rangle=e,
\end{equation}
\item[(2)] commutative relations
\begin{equation}
[1,n-2]=[2,n-2]=\cdots=[n-4,n-2]=e.
\end{equation}
\end{itemize}

Continuing this process, we can also get the 4th, 5th, $\dots, (n-3)$th parts of the relations and simplify them, then get the following relations in $G_1$:

\begin{itemize}
\item[(1)]triple relations
\begin{equation}
\langle n-4,n-3\rangle=\langle n-5,n-4\rangle=\cdots\cdots\cdots=\langle 3,4\rangle=e,
\end{equation}
\item[(2)] commutative relations
\begin{equation}
\begin{aligned}
&[1,n-3]=[2,n-3]=\cdots=[n-5,n-3]=e,\\
&[1,n-4]=[2,n-4]=\cdots=[n-6,n-4]=e,\\
&\cdots\cdots\cdots\\
&[1,4]=[2,4]=e.
\end{aligned}
\end{equation}
\end{itemize}

Finally, we write down the $(n-2)$th part of the relations in $G$, coming from the braids in $(B_3)^{Z_{3\; 3', 4}^2 Z_{4\; 4', 5}^2 \cdots Z_{{(n-1)}\; {(n-1)}', n}^2}$; this time they will appear with conjugated elements $(i=3, \dots, (n-1))$, as follows:
$$
i\rightarrow (i+1) i(i+1)^{-1}, i'\rightarrow (i+1) i'(i+1)^{-1}, i^{-1}\rightarrow (i+1) i^{-1}(i+1)^{-1}, i'^{-1}\rightarrow (i+1) i'^{-1}(i+1)^{-1}.
$$
We get the relations in $G$ as follows:

\begin{equation}\label{eq8.28}
\langle 1',2\rangle=\langle 1',2'\rangle=\langle 1',2^{-1}2'2\rangle=e,
\end{equation}
\begin{equation}\label{eq8.29}
1=2'21'2^{-1}{2'}^{-1},
\end{equation}
\begin{equation}\label{eq8.30}
\begin{aligned}
&\langle2'21'2{1'}^{-1}2^{-1}{2'}^{-1},n(n-1)n^{-1}(n-2)n(n-1)^{-1}n^{-1}(n-3)n(n-1)n^{-1}(n-2)^{-1}\\
&\cdots n(n-1)^{-1}n^{-1} 4 n(n-1)n^{-1}\cdots n(n-1)n^{-1} (n-2)^{-1}n(n-1)^{-1}n^{-1}3\\
& n(n-1)n^{-1}(n-2)n(n-1)^{-1}n^{-1}\cdots n(n-1)^{-1}n^{-1} 4^{-1}  n(n-1)n^{-1}\cdots\\
&(n-2)n(n-1)^{-1}n^{-1}(n-3)^{-1}n(n-1)n^{-1}(n-2)^{-1}n(n-1)^{-1}n^{-1}\rangle=e,
\end{aligned}
\end{equation}
\begin{equation}\label{eq8.31}
\begin{aligned}
&\langle2'21'2'{1'}^{-1}2^{-1}{2'}^{-1},n(n-1)n^{-1}(n-2)n(n-1)^{-1}n^{-1}(n-3)n(n-1)n^{-1}(n-2)^{-1}\\
&\cdots n(n-1)^{-1}n^{-1} 4 n(n-1)n^{-1}\cdots n(n-1)n^{-1} (n-2)^{-1}n(n-1)^{-1}n^{-1}3\\
&n(n-1)n^{-1}(n-2)n(n-1)^{-1}n^{-1}\cdots n(n-1)^{-1}n^{-1} 4^{-1}  n(n-1)n^{-1}\cdots\\
&(n-2)n(n-1)^{-1}n^{-1}(n-3)^{-1}n(n-1)n^{-1}(n-2)^{-1}n(n-1)^{-1}n^{-1}\rangle=e,
\end{aligned}
\end{equation}
\begin{equation}\label{eq8.32}
\begin{aligned}
&\langle2'21'2^{-1}2'2{1'}^{-1}2^{-1}{2'}^{-1},n(n-1)n^{-1}(n-2)n(n-1)^{-1}n^{-1}(n-3)n(n-1)n^{-1}(n-2)^{-1}\\
&\cdots n(n-1)^{-1}n^{-1} 4 n(n-1)n^{-1}\cdots n(n-1)n^{-1} (n-2)^{-1}n(n-1)^{-1}n^{-1}3\\
& n(n-1)n^{-1}(n-2)n(n-1)^{-1}n^{-1}\cdots n(n-1)^{-1}n^{-1} 4^{-1}  n(n-1)n^{-1}\cdots\\
&(n-2)n(n-1)^{-1}n^{-1}(n-3)^{-1}n(n-1)n^{-1}(n-2)^{-1}n(n-1)^{-1}n^{-1}\rangle=e,
\end{aligned}
\end{equation}
\begin{equation}\label{eq8.33}
\begin{aligned}
&3=n(n-1)n^{-1}(n-2)n(n-1)^{-1}n^{-1}\cdots n(n-1)n^{-1}4^{-1}n(n-1)^{-1}n^{-1}\cdots \\ &n(n-1)n^{-1}(n-2)^{-1}n(n-1)^{-1}n^{-1}2'21'2^{-1}{2'}^{-1}{1'}^{-1}2^{-1}{2'}^{-1}\\
&n(n-1)n^{-1}(n-2)n(n-1)^{-1}n^{-1}\cdots n(n-1)n^{-1}4n(n-1)^{-1}n^{-1}\cdots\\
&n(n-1)n^{-1}(n-2)^{-1}n(n-1)^{-1}n^{-1}3^{-1}3'3 n(n-1)n^{-1}(n-2)n(n-1)^{-1}n^{-1}\cdots\\
&n(n-1)n^{-1}4^{-1}n(n-1)^{-1}n^{-1}\cdots n(n-1)n^{-1}(n-2)^{-1}n(n-1)^{-1}n^{-1}\\
&2'21'2'2{1'}^{-1}2^{-1}{2'}^{-1}n(n-1)n^{-1}(n-2)n(n-1)^{-1}n^{-1}\cdots\\
&n(n-1)n^{-1}4n(n-1)^{-1}n^{-1}\cdots n(n-1)n^{-1}(n-2)^{-1}n(n-1)^{-1}n^{-1},
\end{aligned}
\end{equation}
\begin{equation}\label{eq8.34}
\begin{aligned}
&[1,n(n-1)n^{-1}(n-2)n(n-1)^{-1}n^{-1}(n-3)n(n-1)n^{-1}(n-2)^{-1}\\
&\cdots n(n-1)^{-1}n^{-1} 4 n(n-1)n^{-1}\cdots n(n-1)n^{-1} (n-2)^{-1}n(n-1)^{-1}n^{-1}3\\
& n(n-1)n^{-1}(n-2)n(n-1)^{-1}n^{-1}\cdots n(n-1)^{-1}n^{-1} 4^{-1}  n(n-1)n^{-1}\cdots\\
&(n-2)n(n-1)^{-1}n^{-1}(n-3)^{-1}n(n-1)n^{-1}(n-2)^{-1}n(n-1)^{-1}n^{-1}]=e,\\
&[1',n(n-1)n^{-1}(n-2)n(n-1)^{-1}n^{-1}(n-3)n(n-1)n^{-1}(n-2)^{-1}\\
&\cdots n(n-1)^{-1}n^{-1} 4 n(n-1)n^{-1}\cdots n(n-1)n^{-1} (n-2)^{-1}n(n-1)^{-1}n^{-1}3\\
& n(n-1)n^{-1}(n-2)n(n-1)^{-1}n^{-1}\cdots n(n-1)^{-1}n^{-1} 4^{-1}  n(n-1)n^{-1}\cdots\\
&(n-2)n(n-1)^{-1}n^{-1}(n-3)^{-1}n(n-1)n^{-1}(n-2)^{-1}n(n-1)^{-1}n^{-1}]=e,\\
&[1,n(n-1)n^{-1}(n-2)n(n-1)^{-1}n^{-1}(n-3)n(n-1)n^{-1}(n-2)^{-1}\\
&\cdots n(n-1)^{-1}n^{-1} 4 n(n-1)n^{-1}\cdots n(n-1)n^{-1} (n-2)^{-1}n(n-1)^{-1}n^{-1}3'\\
& n(n-1)n^{-1}(n-2)n(n-1)^{-1}n^{-1}\cdots n(n-1)^{-1}n^{-1} 4^{-1}  n(n-1)n^{-1}\cdots\\
&(n-2)n(n-1)^{-1}n^{-1}(n-3)^{-1}n(n-1)n^{-1}(n-2)^{-1}n(n-1)^{-1}n^{-1}]=e,\\
&[1',n(n-1)n^{-1}(n-2)n(n-1)^{-1}n^{-1}(n-3)n(n-1)n^{-1}(n-2)^{-1}\\
&\cdots n(n-1)^{-1}n^{-1}4 n(n-1)n^{-1}\cdots n(n-1)n^{-1} (n-2)^{-1}n(n-1)^{-1}n^{-1}3'\\
& n(n-1)n^{-1}(n-2)n(n-1)^{-1}n^{-1}\cdots n(n-1)^{-1}n^{-1} 4^{-1}  n(n-1)n^{-1}\cdots\\
&(n-2)n(n-1)^{-1}n^{-1}(n-3)^{-1}n(n-1)n^{-1}(n-2)^{-1}n(n-1)^{-1}n^{-1}]=e.\\
\end{aligned}
\end{equation}
We can use all the previous relations to simplify (\ref{eq8.28})--(\ref{eq8.34}), and  get the following relations in $G_1$:

\begin{itemize}
\item[(1)]triple relations
\begin{equation}
\langle1,2\rangle=\langle2,3\rangle=e,
\end{equation}
\item[(2)] commutative relation
\begin{equation}
[1,3]=e.
\end{equation}
\end{itemize}

We also have the projective relation:
\begin{equation}\label{eq8.35}
n'n(n-1)'(n-1)\cdots3'32'21'1=e,
\end{equation}
which is trivial in $G_1$.

In conclusion, we consider now all the above simplified relations in the group $G_1$:
\begin{itemize}
\item[(1)] triple relations
\begin{equation}
\langle 1,2\rangle=\langle 2,3\rangle=\cdots=\langle n-1,n\rangle=e,
\end{equation}
\item[(2)] commutative relations
\begin{equation}
\begin{aligned}
&[1,3]=[1,4]=\cdots=[1,n]=e,\\
&[2,4]=[2,5]=\cdots=[2,n]=e,\\
&\cdots\cdots\cdots\\
&[n-3,n-1]=[n-3,n]=e,\\
&[n-2,n]=e.
\end{aligned}
\end{equation}
\end{itemize}

It is easy to see that $\{1,2,\dots,n\}$ are the generators of $G_1$.
These relations are the same as the relations in $S_{n+1}$. Hence $G_1\cong S_{n+1}$. It is obvious that $\pi_1(X_{n+1,\text{Gal}})$ is trivial, and the Galois cover $X_{n+1,\text{Gal}}$ of $X_{n+1}$ is simply-connected.

\end{proof}

\subsection{General type}\label{General type}

When considering an algebraic surface $X$ as a topological 4-manifold, it has the Chern
numbers $c_1^2(X), c_2(X)$ as topological invariants.
In this subsection, we will prove that the Galois covers of the surfaces in Subsection \ref{n+1-type}  are surfaces of general type by using $c_1^2(X)$.
As a first step, we compute the Chern numbers $c_1^2(X)$. The formula
was treated in \cite[Proposition 0.2]{MoTe87} (proof there is given by F. Catanese).

\begin{prop}\label{chern}
Let $S$ be the branch curve of an algebraic surface $X$. Denote the
degree of the generic projection by $d$, ${\rm deg} S= m$.
Then,
$$c_1^2(X_\text{Gal})=\frac{d!}{4}(m-6)^2.$$
\end{prop}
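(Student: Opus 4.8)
The plan is to compute $c_1^2(X_{\text{Gal}})$ by relating the Chern class of the Galois cover to data on the branch curve $S$ via the projection. I would work with the generic projection $f\colon X \to \mathbb{CP}^2$ of degree $d$, whose branch curve $S \subset \mathbb{CP}^2$ has degree $m$. The Galois cover $X_{\text{Gal}}$ is a $d!$-fold (generically) cover of $\mathbb{CP}^2$ built from the fibered product, so the strategy is to pull back the standard computation of $c_1^2$ for a smooth surface admitting a finite map ramified over $S$, and then to assemble the contributions coming from the $d!$ sheets and from the ramification over $S$.

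Concretely, I would first express $c_1(X_{\text{Gal}})$ in terms of the pullback of $c_1(\mathbb{CP}^2) = 3H$ (where $H$ is the hyperplane class) corrected by the ramification divisor. For the Galois cover associated to the symmetric group $S_d$, each of the $d!$ points in a generic fiber corresponds to an ordering of the $d$ sheets, and the ramification over $S$ occurs along the preimages where two sheets come together. The canonical class of $X_{\text{Gal}}$ is then $K_{X_{\text{Gal}}} = \psi^*\!\left(K_{\mathbb{CP}^2} + \tfrac{1}{2}S\right)$ up to the appropriate combinatorial factor, where $\psi\colon X_{\text{Gal}} \to \mathbb{CP}^2$ is the natural map; this gives a class proportional to $(m/2 - 3)H = \tfrac{1}{2}(m-6)H$. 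Squaring and multiplying by the degree $d!$ of the cover over $\mathbb{CP}^2$ is what should produce the factor $\tfrac{d!}{4}(m-6)^2$.

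The key steps, in order, are: (1) identify $\psi\colon X_{\text{Gal}} \to \mathbb{CP}^2$ and determine its degree as $d!$ from the fibered-product construction in the Galois cover definition; (2) compute the ramification/branch contribution so as to write $K_{X_{\text{Gal}}}$ as a rational multiple of $\psi^*H$, the multiple being $\tfrac{1}{2}(m-6)$ after incorporating $K_{\mathbb{CP}^2} = -3H$ and the half coming from the double-cover nature of the local branching; (3) use $\psi^*H \cdot \psi^*H = d!\,(H\cdot H) = d!$ on $\mathbb{CP}^2$ to evaluate $c_1^2(X_{\text{Gal}}) = K_{X_{\text{Gal}}}^2 = \left(\tfrac{1}{2}(m-6)\right)^2 d! = \tfrac{d!}{4}(m-6)^2$. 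Since this formula is attributed in the excerpt to \cite[Proposition 0.2]{MoTe87} with the proof credited to Catanese, I would appeal to that computation rather than rederiving the full ramification analysis from scratch.

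The main obstacle is step (2): correctly accounting for how the ramification over $S$ distributes across the $d!$ sheets of $X_{\text{Gal}}$ and verifying that the local contribution assembles into exactly the half-integer coefficient $\tfrac{1}{2}(m-6)$. This requires care because $X_{\text{Gal}}$ is the normalization of the closure of the complement of the extended diagonal in the $d$-fold fibered product, and one must check that the singularities of $S$ (nodes, cusps, branch points arising from the regeneration) do not alter the numerical contribution to $c_1^2$ beyond what the degree $m$ already encodes. Since the cited proposition handles this, I would invoke it directly and simply record that for our surfaces the relevant $d$ and $m$ are read off from the degeneration, deferring the evaluation of $c_1^2$ for the specific surfaces of minimal degree to the subsequent application of the formula.
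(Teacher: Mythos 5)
Your proposal is correct and ends up in the same place as the paper, which offers no proof of this proposition at all: it simply quotes the formula from \cite[Proposition 0.2]{MoTe87}, crediting the proof to Catanese, exactly as you do in your final step. Your Riemann--Hurwitz sketch (degree-$d!$ cover $\psi\colon X_{\text{Gal}}\to\mathbb{CP}^2$ with simple ramification along the reduced preimage of $S$, giving $K_{X_{\text{Gal}}}=\psi^*\bigl((\tfrac{m}{2}-3)H\bigr)$ and hence $K^2=\tfrac{d!}{4}(m-6)^2$) is an accurate account of what that cited computation contains, so deferring the verification at the singular points of $S$ to the reference is exactly what the authors themselves do.
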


Note that in Subsection \ref{n+1-type},  $d = n+1$ and $m = 2n$. Then by Proposition \ref{chern}, we obtain

\begin{itemize}
  \item $c_1^2(X_{5,\text{Gal}})=5!\cdot1$;
  \item $c_1^2(X_{6,\text{Gal}})=6!\cdot2^2$;
  \item $\cdots\cdots\cdots$
  \item $c_1^2(X_{n,\text{Gal}})=(n)!\cdot{(n-4)}^2$;
  \item $c_1^2(X_{n+1,\text{Gal}})=(n+1)!\cdot{(n-3)}^2$.
\end{itemize}
It is obvious that $c_1^2(X_{n,\text{Gal}})>0$ for $n \geq 5$. It means that the Galois covers are general type surfaces, as explained in \cite[Proposition X.1]{B} or \cite[Theorem 1.1]{BHPV}.

\section{Proof of Theorem \ref{mainthm}}\label{main}
In this section we prove Theorem \ref{mainthm}. First, we recall the following result of Pinkham:
\begin{thm}\label{Pin}
(\cite{Pi}) Let $X \subset \mathbb{CP}^n$ be a smooth, irreducible, and projectively
Cohen-Macaulay surface. Then $X$ degenerates to the cone over a hyperplane section of $X$.
\end{thm}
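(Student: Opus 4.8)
The plan is to produce Pinkham's degeneration by hand as a one-parameter $\mathbb{C}^*$-family and then to recognise its central fibre as the cone, using the projective Cohen–Macaulay hypothesis at exactly the point where one must prevent the limit from acquiring an embedded component at the vertex. Choose coordinates so that $H=\{x_0=0\}\subset\mathbb{CP}^n$ is a general hyperplane, $v=[1:0:\cdots:0]\notin H$, and $Y=X\cap H\subset H\cong\mathbb{CP}^{n-1}$ is the hyperplane section, which is smooth by Bertini. Write $A=\mathbb{C}[x_0,\dots,x_n]/I_X$ for the homogeneous coordinate ring. Since $X$ is smooth and irreducible, $I_X$ is prime and $A$ is a three-dimensional graded domain; in particular $x_0$, being a nonzero element of the domain $A$, is a nonzerodivisor, so $Y$ is an effective Cartier divisor on $X$.

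First I would build the family as the orbit closure under the action $t\cdot[x_0:x_1:\cdots:x_n]=[x_0:tx_1:\cdots:tx_n]$, which fixes $H$ pointwise and contracts $\mathbb{CP}^n\setminus H$ onto $v$ as $t\to0$. Let $\mathcal{X}\subseteq\mathbb{A}^1\times\mathbb{CP}^n$ be the closure of $\{(t,\,t\cdot x):t\in\mathbb{C}^*,\,x\in X\}$ and restrict the first projection to a disk $\Delta$ about $0$; for $t\neq0$ the fibre is $t\cdot X\cong X$, matching the definition of a projective degeneration. Flatness is formal: $\mathcal{X}$ is the closure of the image of the irreducible variety $\mathbb{C}^*\times X$, hence integral, and it dominates the smooth curve $\mathbb{A}^1$, so the projection is flat. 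Algebraically this family is the deformation to the normal cone of the Cartier divisor $Y\subset X$: its total coordinate ring is the extended Rees algebra of the $(x_0)$-adic filtration on $A$, and the central fibre is $\operatorname{Proj}$ of the associated graded ring $\operatorname{gr}_{(x_0)}A=\bigoplus_{i\ge0}(x_0)^i/(x_0)^{i+1}$. Identifying this $\operatorname{Proj}$ with the cone $C(Y)$ is the whole content of the theorem.

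The key computation is the following. Because $x_0$ is a nonzerodivisor, multiplication by $x_0^i$ gives isomorphisms $A/(x_0)\xrightarrow{\sim}(x_0)^i/(x_0)^{i+1}$, so that
$$\operatorname{gr}_{(x_0)}A\;\cong\;\bigl(A/(x_0)\bigr)[T],$$
with $T$ the class of $x_0$. Now I invoke the hypothesis: as $A$ is Cohen–Macaulay and $x_0$ is a nonzerodivisor, $A/(x_0)$ is Cohen–Macaulay of dimension $2$, hence has depth $\ge1$ at the irrelevant ideal; equivalently $I_X+(x_0)$ is saturated, so that $A/(x_0)$ is precisely the saturated homogeneous coordinate ring $R_Y$ of the geometric section $Y$. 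Therefore $\operatorname{gr}_{(x_0)}A\cong R_Y[T]$, which is exactly the homogeneous coordinate ring of the cone over $Y$ with vertex $v$ embedded in $\mathbb{CP}^n$. It follows that the central fibre is $C(Y)$, and it is reduced because $Y$ is. As an independent check, the same regular element applied to $0\to A(-1)\xrightarrow{\cdot x_0}A\to R_Y\to0$ yields $\mathrm{HF}_X(d)=\sum_{e\le d}\mathrm{HF}_Y(e)=\mathrm{HF}_{C(Y)}(d)$, reconfirming both flatness and the identification of Hilbert polynomials.

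I expect the genuine obstacle to be the saturation step, and nothing else: the construction of the $\mathbb{C}^*$-family and its flatness are formal, and the isomorphism $\operatorname{gr}_{(x_0)}A\cong(A/(x_0))[T]$ needs only that $x_0$ be regular, which follows already from irreducibility. What can fail without the Cohen–Macaulay assumption is the equality $A/(x_0)=R_Y$: in general $A/(x_0)$ may differ from the saturated ideal of $Y$ in low degrees, in which case the associated graded ring is the coordinate ring of a non-reduced thickening of $C(Y)$ with an embedded point at the vertex, rather than the cone itself. Projective Cohen–Macaulayness is precisely what forces the general hyperplane section to be arithmetically Cohen–Macaulay, hence its defining ideal saturated, and so rules out this pathology; this is where the hypothesis is indispensable.
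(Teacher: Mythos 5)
The paper offers no proof of this theorem: it is stated as a quoted result of Pinkham \cite{Pi} and used as a black box, so there is no internal argument to compare yours against. Judged on its own, your proof is correct, and it is in substance the classical ``sweeping out the cone'' construction that underlies Pinkham's result. The three pillars are all sound: flatness of the $\mathbb{C}^*$-orbit closure, because an integral scheme dominating a smooth curve is flat over it; the isomorphism $\operatorname{gr}_{(x_0)}A\cong\bigl(A/(x_0)\bigr)[T]$, which needs only that $x_0$ be a nonzerodivisor; and the identification $A/(x_0)=R_Y$, which is exactly where projective Cohen--Macaulayness enters (depth at least $1$ at the irrelevant ideal forces $I_X+(x_0)$ to be saturated) and which is precisely what rules out an embedded point at the vertex $v$ of the limit --- the genuine content of the theorem, as your closing paragraph correctly observes. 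The one step you assert rather than prove is that the central fibre of the orbit closure is $\operatorname{Proj}$ of $\operatorname{gr}_{(x_0)}A$, equivalently that the total coordinate ring of the family is the extended Rees algebra; this is the standard Gr\"obner/weight-degeneration fact, and the usual pitfall (initial forms must be taken of all elements of $I_X$, not merely of a chosen generating set) is avoided by your formulation via the $(x_0)$-adic filtration on $A$, so quoting it is legitimate. A minor cosmetic point: smoothness of $Y$ via Bertini is not needed anywhere, since the statement and the argument only require $H$ to avoid the finitely many bad loci (e.g.\ $X\not\subseteq H$).
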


Let $C$ be the hyperplane section of $X$. Suppose that $C$
can be degenerated to a stick curve $C_0$. In this case, $S$
can be degenerated to the cone over the stick curve $C_0$.  Therefore:

\begin{cor}\label{cor}
(\cite[Corollary 12.2]{C-C-F-M-5}) Any surface $X$ of minimal degree (i.e., of degree $n$) in $\mathbb{CP}^{n+1}$
can be degenerated to the cone over the stick curve $C_{T_n}$, for any tree $T_n$ with $n$ vertices.
\end{cor}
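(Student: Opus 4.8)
The plan is to prove the final statement, Corollary \ref{cor}, by combining Pinkham's degeneration result (Theorem \ref{Pin}) with the one-dimensional fact that a hyperplane section of a minimal-degree surface is a rational normal curve, which degenerates to any connected stick curve of the appropriate degree. First I would verify the hypotheses of Theorem \ref{Pin}: a surface $X \subset \mathbb{CP}^{n+1}$ of minimal degree $n$ is smooth, irreducible, non-degenerate, and (crucially) projectively normal, hence arithmetically Cohen-Macaulay. These are classical facts about varieties of minimal degree (by the del Pezzo--Bertini classification, $X$ is either a rational normal scroll, the Veronese surface, or a cone), so I would cite the standard reference for minimal-degree varieties and invoke projective Cohen-Macaulayness as the key property that lets Pinkham's theorem apply.

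Next I would analyze the hyperplane section. A general hyperplane section $C = X \cap H$ of a minimal-degree surface in $\mathbb{CP}^{n+1}$ is a curve of degree $n$ spanning the hyperplane $H \cong \mathbb{CP}^n$; since it has minimal degree in $\mathbb{CP}^n$ and is irreducible, it is a rational normal curve of degree $n$. The central step is then to observe that a rational normal curve $C$ of degree $n$ flatly degenerates, inside $\mathbb{CP}^n$, to the connected stick curve $C_{T_n}$ for any chosen tree $T_n$ on $n$ vertices; this is exactly the kind of degeneration of the hyperplane section $C$ to a stick curve $C_0$ contemplated in the sentence preceding the corollary. Combining: by Theorem \ref{Pin}, $X$ degenerates to the cone over $C$; degenerating $C$ in turn to $C_{T_n}$ and taking cones (the cone construction is compatible with flat families over the disk), we obtain a flat degeneration of $X$ to the cone over $C_{T_n}$, which is precisely the planar Zappatic central fiber we want.

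I expect the main obstacle to be justifying the stick-curve degeneration of the rational normal curve and its compatibility with the coning operation — that is, producing a genuine flat family $\mathfrak{X} \to \Delta$ whose general fiber is $X$ and whose special fiber is the cone over $C_{T_n}$, rather than just asserting the two degenerations separately. Concretely, one must check that coning over the total space of the hyperplane-section degeneration stays flat and has the correct central fiber, and that the resulting central fiber realizes the \emph{connected tree} combinatorics of $T_n$ with only the allowed Zappatic singularities. Since the statement is quoted verbatim as \cite[Corollary 12.2]{C-C-F-M-5}, the cleanest route is to attribute this flatness-and-combinatorics bookkeeping to that reference and present the proof as the short two-step reduction (Pinkham to the cone over $C$, then $C \leadsto C_{T_n}$), rather than reconstructing the full flat-family argument here.
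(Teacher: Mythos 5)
Your proposal follows essentially the same route as the paper: invoke Pinkham's Theorem \ref{Pin} to degenerate $X$ to the cone over its hyperplane section $C$ (a rational normal curve), then degenerate $C$ to the stick curve $C_{T_n}$ and cone, delegating the flatness bookkeeping to \cite[Corollary 12.2]{C-C-F-M-5}. Your added care in checking the projectively Cohen--Macaulay hypothesis and the compatibility of coning with the flat family is a reasonable elaboration of what the paper leaves implicit, but it is not a different argument.
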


Every nondegenerate irreducible surface of degree $n~(n \geq 5)$ in $\mathbb{CP}^{n+1}$ is a rational normal scroll. Any hyperplane section of such a surfaces is a normal rational curve. Beginning at a general point $p_i$ on each component of $C_{T_n}$, the line bundle $\mathcal {O}_{C_{T_{n}} (p_1 +\ldots+p_n)}$ is very ample. $C_{T_n}$ has arithmetic
genus $0$ and is a flat limit of rational normal curves in $\mathbb{CP}^n$. $C_{R_n}$
is a flat limit of rational normal curves (including  $C_{T_n}$) in $\mathbb{CP}^n$.
According to Corollary \ref{cor},  any surface $X$ of minimal degree in $\mathbb{CP}^{n+1}$ can be degenerated  to the cone over the stick curve $C_{R_n}$.

The fundamental group of the Galois cover $\pi_1(X_{\text{Gal}})$ does not change when the complex structure of $X$ changes continuously. In the previous narrative, we proved that any surface $X$ of minimal degree in $\mathbb{CP}^{n+1}$
can be degenerated to the cone over the stick curve $C_{R_n}$, so we can use Theorem \ref{thm(n+1)point} to get Theorem \ref{mainthm}.

\end{document}